\newtheorem{thm}{Theorem}[section]
\newtheorem{lemma}{Lemma}[section]
\newtheorem{Cor}{Corollary}[section]
\theoremstyle{definition}
\newtheorem{Def}{Definition}[section]
\newtheorem{Ex}{Example}[section]
\theoremstyle{definition}
\newtheorem{Rem}{Remark}[section]
\numberwithin{equation}{section}
\begin{document}

\title[Quasiconformal equivalence]{
    The quasiconformal equivalence of Riemann surfaces and 
    the universal Schottky space}
\author{
    Hiroshige Shiga    
}
\address{Department of Mathematics,
Tokyo Institute of Technology \\
O-okayama, Meguro-ku Tokyo, Japan} 

\email{shiga@math.titech.ac.jp}
\date{\today}    
\keywords{Riemann surface, Quasiconformal map,
Teichm\"uller spaces.}
\subjclass[2010]{Primary 30F60, Secondary 30C62, 30F40.}
\thanks{
The author was partially supported by the Ministry of Education, Science, Sports
and Culture, Japan;
Grant-in-Aid for Scientific Research (B), 16H03933}

\begin{abstract}
  In the theory of Teichm\"uller space of Riemann surfaces, we consider the set of Riemann surfaces which are quasiconformally equivalent.
  For topologically finite Riemann surfaces, it is quite easy to examine if they are quasiconformally equivalent or not.
  On the other hand, for Riemann surfaces of topologically infinite type, the situation is rather complicated.
  
  In this paper, after constructing an example which shows the complexity of the problem, we give some geometric conditions for Riemann surfaces to be quasiconformally equivalent.
  
  Our argument enables us to obtain a universal property of the deformation spaces of Schottky regions, which is analogous to the fact that  the universal Teichm\"uller space contains all Teichm\"uller spaces.
  
\end{abstract}
\maketitle
\section{Introduction}
    In the theory of Teichm\"uller space of Riemann surfaces, we consider the set of Riemann surfaces which are quasiconformally equivalent.
    Here, we say that two Riemann surfaces are quasiconformally equivalent if there is a quasiconformal homeomorphism between them.
    Hence, at the first stage of the theory, we have to know a condition for Riemann surfaces to be quasiconformally equivalent.
   
    The condition is quite obvious if the Riemann surfaces are topologically finite. Indeed, the genus, the number of punctures and the number of borders of surfaces are completely determine the quasiconformal equivalence.
    On the other hand, for Riemann surfaces of topologically infinite type, the situation is rather difficult.
    For example, viewing Royden algebras of open Riemann surfaces, Nakai (\cite{Nakai}, see also \cite{Sario-Nakai}) obtains an algebraic criterion for the equivalence.
    He shows that two Riemann surfaces are quasiconformally equivalent if and only if the Royden algebras of those Riemann surfaces are isomorphic.
    However, it is hard to examine the condition in general since the Royden algebras are huge function spaces.
     In this paper, we consider geometric conditions for the quasiconformal equivalence of open Riemann surfaces.

    First, we give examples of Riemann surfaces in order to show the difficulty of the problem.
    We say that two Riemann surfaces $R_1$ and $R_2$ are \emph{quasiconformally equivalent near the ideal boundary} if they are quasiconformally equivalent outside of compact subsets of those surfaces.
    At the first glance, it seems to be true that if two Riemann surfaces are quasiconformally equivalent near the ideal boundary, then they are quasiconformally equivalent. However, it is not true. We may construct a counter example in \S \ref{sec:counter}.
    Namely, we construct two homeomorphic Riemann surfaces $R_1, R_2$ and compact subsets $K_i$ of $R_i$ $(i=1, 2)$ such that $R_1\setminus K_1$ and $R_2\setminus K_2$ are conformally equivalent but $R_1$ and $R_2$ are \emph{not} quasiconformally equivalent.
    This example shows that the quasiconformal equivalence is not a boundary property.
    In the second example, we show that domains given by Schottky groups are not quasiconformally equivalent to domains given by boundary groups of Schottky spaces.

    To give conditions for open Riemann surfaces to be quasiconfomally equivalent, we show a gluing lemma for quasiconformal mappings on Riemann surfaces(Lemma \ref{Lemma:Gluing}).
By using the gluing lemma, we shall give a condition under which Riemann surfaces are quasiconformally equivalent. 
MacManus \cite{MacManus} obtains similar results from a different point of view, that is, a view point of uniform domains, while we are considering the problems from the theory of Riemann surfaces of infinite type.

In \S \ref{sec:universal}, we will discuss a universality of Schottky regions which are complements of the limit sets of Schottky groups.
In fact, we show that Schottky regions are quasiconformally equivalent to each other (Theorem \ref{thm:universalS}).
The result makes a striking contrast to the second example
 in \S \ref{sec:counter}.

At the end, we present \emph{the universal Schottky space} which includes all Schottky spaces.

{\bf Acknowledgment.} The author thanks Prof. H. Fujino for his valuable comments.

\section{Preliminaries}
In this section, we give definitions, terminology and known facts used in the later sections.

Let $R$ be an open Riemann surface.
A sequence $\{W_n\}_{n=1}^{\infty}$ of subdomains of $R$ is called a \emph{regular exhaustion} of $R$ if it satisfies the following conditions.
\begin{enumerate}
	\item Each $W_n$ is a relatively compact domain in $R$ bounded by a finite number of mutually disjoint smooth simple closed curves in $R$;
	\item every connected component of the complement of $W_n$ $(n\in \mathbb N)$ is not compact in $R$;
	\item $W_1\subset W_2\subset \dots \subset W_n\subset W_{n+1}\subset \dots$ and $R=\cup_{n=1}^{\infty}W_n$.
\end{enumerate}

It is known that any open Riemann surface has a regular exhaustion (cf. \cite{Ahlfors-Sario}).

A Riemann surface which is homeomorphic to a triply connected planar domain is called \emph{a pair of pants}.
If a Riemann surface is decomposed into pairs of pants $\{P_n\}$, then we say that the Riemann surface admits a pants decomposition $\{P_n\}$.

\medskip
\noindent
{\bf The Douady-Earle extension}

Let $\phi$ be an orientation preserving homeomorphism from $\mathbb R$ to itself.
The mapping $\phi$ is called \emph{quasi-symmetric} if there exists a constant $M>0$ such that
\begin{equation*}
	M^{-1}\leq \frac{\phi (x)-\phi (x-t)}{\phi (x+t)-\phi (x)}\leq M
\end{equation*}
holds for any $x\in \mathbb R$ and $t>0$.

It is known that(cf. \cite{Ahlfors})  if $\phi : \mathbb R\to \mathbb R$ is quasi-symmetric, then it has a quasiconformal extension to the upper halfplane $\mathbb H$.
Namely, there exists a quasiconformal mapping $f : \mathbb H\to \mathbb H$ whose boundary value on $\mathbb R$ is $\phi$.

In the famous paper by Douady and Earle \cite{Douady-Earle}, they show that every homeomorphism from $\mathbb R$ to itself admits so-called a \emph{conformal natural extension} to $\mathbb H$, which is called the Douady-Earle extension. We denote the Douady-Earle extension of $\phi$ by $E(\phi)$.
The Douady-Earle extension $E(\phi)$ is a homeomorphism on $\mathbb H$ with boundary value $\phi$ and it is conformal natural, that is, for any $\gamma_1, \gamma_2\in \textrm{PSL}(2, \mathbb R)$,
\begin{equation*}
	\gamma_1\circ E(\phi)\circ \gamma_2=E(\gamma_1\circ\phi\circ\gamma_2)
\end{equation*}
holds. Moreover, $E(\phi)$ is real analytic in $\mathbb H$ and if $\phi$ is quasi-symmetric, then $E(\phi)$ is quasiconformal in $\mathbb H$.

\medskip
\noindent
{\bf Teichm\"uller space and Schottky space}

Let $R$ be a hyperbolic Riemann surface and $\Gamma_{R}$ be a Fuchsian group acting on $\mathbb H$ which represents $R$.
A quasiconformal mapping $f : \widehat{\mathbb C}\to \widehat{\mathbb C}$ is called a \emph{quasiconformal deformation} of $\Gamma_{R}$ if it is conformal on the lower halfplane $\mathbb L$ and $f\circ\Gamma_{R}\circ f^{-1}\subset \textrm{PSL}(2, \mathbb C)$.
We say that two quasiconformal deformations $f, g$ of $\Gamma_R$ are equivalent if there exists a M\"obius transformation $A$ such that $g=A\circ f$. 
The Teichm\"uller space $\mathscr{T}(\Gamma_R)$ of the Fuchsian group $\Gamma_R$ is the set of equivalence classes of quasiconformal deformations of $\Gamma_R$.


Let $\textrm{Belt}(\Gamma_R ; \mathbb H)$ be the set of bounded measurable functions $\mu$ on $\mathbb C$ with $\|\mu\|_{\infty}<1$ satisfying
\begin{equation*}
	\mu (\gamma (z))\overline{\gamma'(z)}\gamma'(z)^{-1}=\mu (z) \quad (\textrm{a.e. in }\mathbb H)
\end{equation*}
for any $\gamma\in \Gamma_R$ and $\mu (z)=0$ for any $z\in \mathbb L$.
$\textrm{Belt}(\Gamma_R ; \mathbb H)$ is a complex Banach space by the usual way.

For each $\mu\in \textrm{Belt}(\Gamma_R ; \mathbb H)$, there exists a quasiconformal deformation $w_{\mu} : \widehat{\mathbb C}\to \widehat{\mathbb C}$ of $\Gamma_R$ with
\begin{equation*}
		\frac{\partial w_{\mu}(z)}{\partial \overline z}=\mu (z)\frac{\partial w_{\mu}(z)}{\partial z},\ a.e.
\end{equation*}
Hence, we have a projection $\pi_{T} : \textrm{Belt}(\Gamma_R ; \mathbb H)\to \mathscr{T}(\Gamma_{R})$ by sending $\mu\in \textrm{Belt}(\Gamma_R ; \mathbb H)$ to the equivalence class of $w_{\mu}$.
It is known that the Teichm\"uller space $\mathscr{T}(\Gamma_R)$ admits a complex structure so that the projection $\pi_{T}$ is holomorphic.
It is also known that the complex structures of $\mathscr{T}(\Gamma_R)$ and $\mathscr{T}(\Gamma_{R'})$ are the same if $R$ and $R'$ are quasiconformally equivalent.

If the Riemann surface $R$ is the upper halfplane $\mathbb H$, then the group $\Gamma_R$ is the trivial group $\{id\}$.
We denote by $\mathscr{T}$ the Teichm\"uller space $\mathscr{T}(\{id\})$ and we call it \emph{the universal Teichm\"uller space}.
For any hyperbolic Riemann surface $R$, there exists a natural holomorphic embedding
\begin{equation}
\label{eqn:UniversalT}
	\iota_{R} : \mathscr{T}(\Gamma_{R}) \hookrightarrow \mathscr{T}.
\end{equation}

For more details on Teichm\"uller spaces, see \cite{Hubbard} and \cite{Imayoshi-Taniguchi}.

\medskip

Schottky space is defined in a similar way to Teichm\"uller space.
Let $G_g$ be a Schottky group of genus $g>1$.
A quasiconformal mapping $f : \widehat{\mathbb C}\to \widehat{\mathbb C}$ is called a \emph{quasiconformal deformation} of $G_g$ if $f\circ G_g\circ f^{-1}\subset \textrm{PSL}(2, \mathbb C)$.
We say that two quasiconformal deformations $f, g$ of $G_g$ are equivalent if there exists a M\"obius transformation $A$ such that $g$ is homotopic to $A\circ f$ $\mathrm{rel }$ $\Lambda({G_g})$.
The Schottky space $\mathscr{S}_g$ of genus $g$ is the set of equivalence classes of quasiconformal deformations of $G_g$.

Let $\textrm{Belt}(G_g; \mathbb C)$ be the set of bounded measurable functions $\mu$ on $\mathbb C$ with $\|\mu\|_{\infty}<1$ satisfying
\begin{equation*}
	\mu (\gamma (z))\overline{\gamma'(z)}\gamma'(z)^{-1}=\mu (z),\ a.e.
\end{equation*}
for any $\gamma\in G_g$.
By the same way as in Teichm\"uller spaces, we have a projection $\pi_{S} : \textrm{Belt}(S_g; \mathbb C)\to \mathscr{S}_g$ and the Schottky space $\mathscr{S}_g$ admits a complex structure so that the projection $\pi_S$ is holomorphic.
It is known that the complex structure of $\mathscr{S}_g$ depends only on the genus $g$.

\begin{Rem}
	The Schottky space defined above is called the strong deformation space of $G_g$ in \cite{Kra}, in which the complex structure of the space is discussed.
\end{Rem}

\medskip
\noindent
{\bf Teichm\"uller space of a closed set.}

Let $E$ be a closed set in $\widehat{\mathbb C}$.
We denote by $\mathrm{Belt}(\mathbb C)=\mathrm{Belt}(\{id\}; \mathbb C)$ the set of bounded measurable functions $\mu$ on $\mathbb C$ with $\|\mu\|_{\infty}<1$.
Two functions $\mu_1, \mu_2$ are said to be equivalent if there exists a M\"obius transformation $A$ such that $A\circ w_{\mu_1}$ is homotopic to $w_{\mu_2}$ rel $E$.
We define Teichm\"uller space of $E$, which is denoted by $\mathscr{T}(E)$, is defined by the set of equivalence classes.

\section{Examples of Riemann surfaces on quasiconformal non-equivalence}
\label{sec:counter}
In this section, we construct two examples of pairs of Riemann surfaces which are not quasiconformally equivalent.
In the first example, we construct two Riemann surfaces $R_1$ and $R_2$ which are quasiconformally equivalent near the ideal boundary but not quasiconformally equivalent.
The second one is an example of Riemann surfaces defined by Cantor sets. The example has an own interest itself and it is also related to the result in Theorem \ref{thm:universalS} in \S 6.

\begin{Ex}
Put $a_n=(n!)^{-1}$and take pairs of pants $P_n$ bounded by three hyperbolic closed geodesics whose length are $1, 1$ and $a_n$ $(n=0, 1, 2, \dots)$. 	
We glue $P_{n}$ and $P_{n+1}$ along two boundary curves with length $1$ to make a Riemann surface $T_n$ of genus $1$ with two boundary curves of lengths $a_n$ and $a_{n+1}$.
Since $T_n$ and $T_{n+1}$ have a boundary curve of the length $a_{n+1}$, we may glue them along the boundary curves.
By repeating this operation for $n=0, 1, 2, \dots$, we get a Riemann surface $R_{1}'=\bigcup_{n=0}^{\infty}\overline{T_n}$ which is a Riemann surface of infinite genus with a geodesic boundary curve of length $1$.
We take a Riemann surface $S$ of genus $1$ with a geodesic boundary curve of length $1$ by gluing two boundary curves of $P_0$.
Gluing $R_{1}'$ and $S$ along the boundary curves, we have an open Riemann surface $R_1$ of infinite genus.

Next, we make a Riemann surface $R_{2}'$ by the same way as $R_{1}'$ but we do it from $n=1$ instead of $n=0$ for $R_{1}'$.
Then, $R_{2}'$ is still a Riemann surface of infinite genus with a geodesic boundary of length $1$.
Hence, we can glue $R_{2}'$ and $S$ along the boundary curves, we have an open Riemann surface $R_2$ of infinite genus (\textsc{Figure} \ref{FigCounter}).

\begin{figure}
\centering
	\includegraphics[width=14cm]{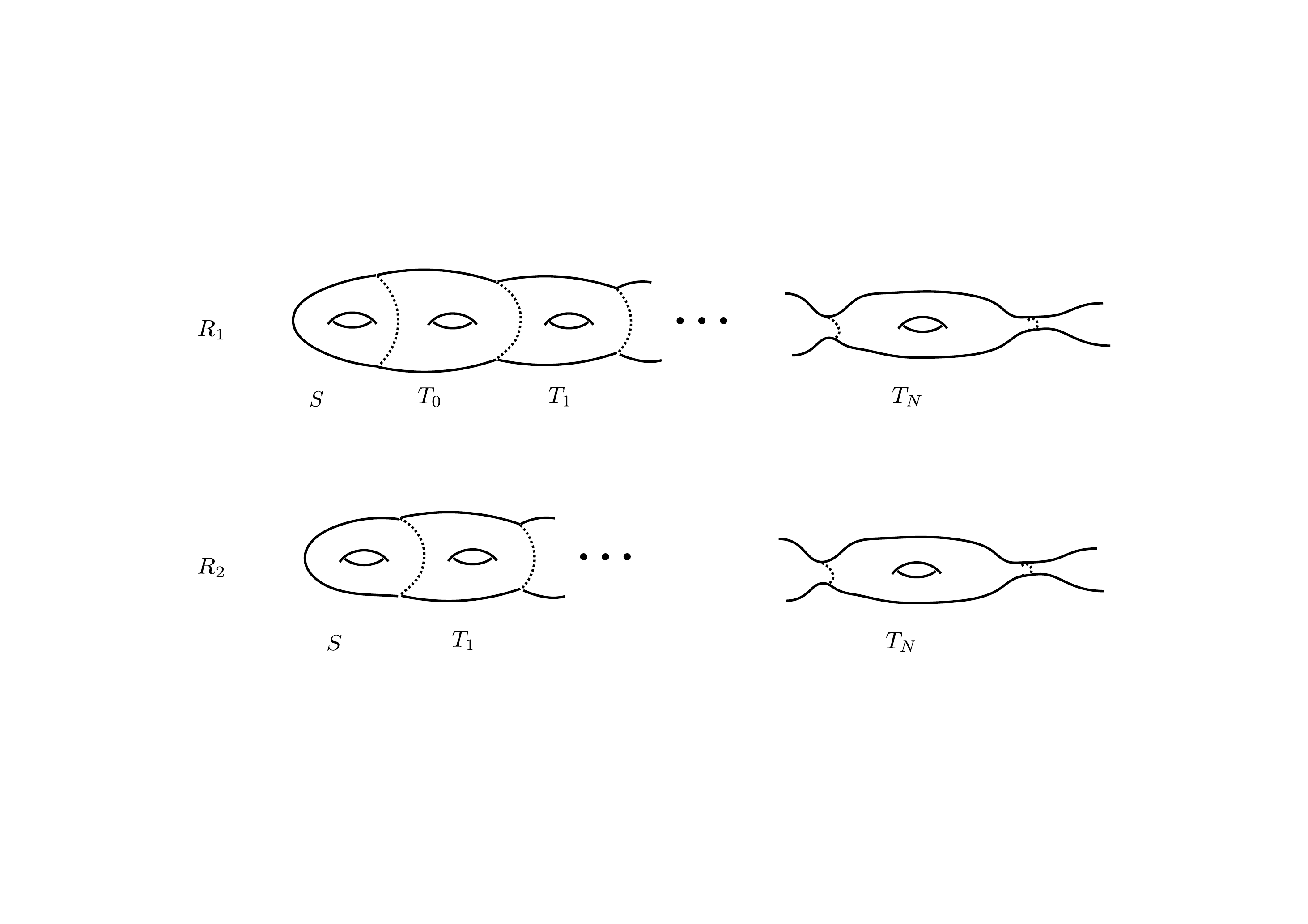}
	\caption{}
	\label{FigCounter}	
\end{figure}

Obviously, both $R_1$ and $R_2$ are homeomorphic and they have the same subsurface $\bigcup_{n=1}^{\infty}\overline{T_n}$. 
Hence, $R_1\setminus K_1$ and $R_2\setminus K_2$ are conformally equivalent for $K_1=\overline{S\cup T_0}$ and $K_2=\overline{S}$.
In particular, they are quasiconformally equivalent near the ideal boundary.
However, we may show that there are no quasiconformal mappings between $R_1$ and $R_2$.

Suppose that there exists a $K$-quasiconformal mapping $F : R_1\to R_2$ for some $K\geq 1$. 
We take a sufficiently large $N\in \mathbb N$ with $N>K$.
We consider the closed geodesic $\alpha_{N}$ of $\partial T_{N}\subset R_1$ with length $a_N$ and the geodesic $[F(\alpha_N)]$ homotopic to $F(\alpha_N)$ in $R_2$.
It follows from Wolpert's formula (\cite{ShigaHyper},\cite{Wolpert}) that the hyperbolic length $\ell ([F(\alpha_N)])$ of $[F(\alpha_N)]$ in $R_2$ satisfies an inequality,
\begin{equation*}
	K^{-1}a_N\leq \ell([F(\alpha_N)])\leq Ka_N.
\end{equation*}
Hence, we have
\begin{equation}
\label{eqn:Wolpert}
	a_{N+1}=\frac{1}{(N+1)!}<N^{-1}a_N\leq \ell([F(\alpha_N)])\leq Na_N<\frac{N}{N!}=a_{N-1}.
\end{equation}

If the geodesic $[F(\alpha_N)]$ transversely intersects with some $\alpha_i$ in $R_2$, then it follows from the collar theorem (cf. \cite{Buser}) that the length $\ell ([F(\alpha_N)])$ is large enough.
If $[F(\alpha_N)]\cap \alpha_i=\emptyset$ for any $i\in \mathbb N$, from the geometry of $S$ and $T_n$ $(n\in \mathbb N)$ we see that $\ell ([F(\alpha_N)])$ is larger than $a_N$ for a sufficiently large $N$.

Hence, we conclude that only the closed geodesic of $\overline{T_N}\cap\overline{T_{N+1}}$ in $R_2$ has the length satisfying (\ref{eqn:Wolpert}).
Therefore, the subsurface $\overline{S}\cup \bigcup_{n=0}^{N-1}\overline{T_n}$ of $R_1$ which is of genus $N+1$ has to be mapped a subsurface of $R_2$ of genus $N$.
It is absurd because $F$ is a homeomorphism. Thus, we have a contradiction.
\end{Ex}

\begin{Ex}
\label{Ex:nonQC}
	Let $G$ be a Schottky group of genus $g>1$. 
	The group is constructed from $2g$ (topological) closed disks $D_1, D_2, \dots , D_{2g}$ with $D_i\cap D_j=\emptyset$ $(i\not=j)$ and $\gamma_i\in \textrm{PSL}(2, \mathbb C)$ $(i=1, 2, \dots , g)$ which map the outside of $D_{2i-1}$ onto the inside of $D_{2i}$.
	The group $G$ is a Kleinian group generated by $\gamma_1, \gamma_2. \dots , \gamma_g$ and it is a purely loxodromic free group of rank $g$.
	The region of discontinuity $\Omega(G)$ of $G$ is a connected domain in $\widehat{\mathbb C}$ and the complement $\Lambda(G)$, the limit set of $G$, is a Cantor set. Thus, $\Omega(G)$ is an open Riemann surface of infinite type.
	
	Now, we consider a Kleinian group $G'$ of \emph{Schottky type} with cusps.
	We construct the group $G'$ as follows.
	
	Take $2g$ closed disks $D'_1, D'_2, \dots , D'_{2g}$ such as $D'_i\cap D'_j=\emptyset$ for $1\leq i<j\leq 2g-1$, $D'_i\cap D'_{2g}=\emptyset$ for $1\leq i\leq 2g-2$ but $D'_{2g}$ is tangential to $D'_{2g-1}$ at one point $z_0$. 
	We also take $\delta_i\in \textrm{PSL}(2, \mathbb C)$ $(i=1, 2, \dots g-1)$ which map the outside of $D'_{2i-1}$ onto the inside of $D'_{2i}$, and $\delta_{g}\in \textrm{PSL}(2, \mathbb C)$ which maps the outside of $D'_{2g-1}$ onto the inside of $D'_{2g}$ fixing  $z_0$.
	Hence, $\delta_g$ is a parabolic transformation with the fixed point $z_0$.	
	The group $G'$ is generated by $\delta_1, \delta_2, \dots , \delta_{g}$.
	The group $G'$ is still a Kleinian group and a free group of rank $g$, but it contains parabolic elements $\delta_{g}$.
	
	We may take a sequence $\{G_n\}_{n=1}^{\infty}$ of Schottky groups of genus $g$ such that it converges to $G'$. 
	Hence, the group $G'$ is regarded as a group on the boundary of Schottky space.

	The limit set $\Lambda(G')$ of $G'$ is also a Cantor set and the region of discontinuity $\Omega(G')$ is an open Riemann surface of infinite type. 
	
	Thus, we have two open Riemann surfaces $\Omega(G)$ and $\Omega(G')$ of infinite type both of which are complements of some Cantor sets.
	Then, we insist the following.
	
	\medskip
	
	\noindent
	{\bf Claim:} $\Omega(G)$ and $\Omega(G')$ are not quasiconformally equivalent.
	
	\medskip
	
	Since both $G$ and $G'$ are quasiconformal deformations of  Fuchsian groups, we may assume that $G$ and $G'$ are Fuchsian groups, so that $\Lambda (G), \Lambda(G') \subset \mathbb R$.
	Suppose that there exists a quasiconformal mapping $f$ from $\Omega (G)$ onto $\Omega(G')$.
	Then we have known the following (\cite{ShigaKlein} Theorem 1. 2 and Corollary 1. 3).
	\begin{enumerate}
		\item the mapping $f$ is extended to a quasiconformal mapping from $\widehat{\mathbb C}$ onto itself. We use the same letter $f$ for the extended mapping;
		\item the mapping $f$ is extended to a homeomorphism of the Martin compactifications. We denote the extended homeomorphism by $f^{*}$ (as for the Martin compactification, see \cite{Constantinescu-Cornea}).
	\end{enumerate}
	
	Let $p\in \Lambda (G')$ be a parabolic fixed point. From (1) above, there exists a point $q\in \Lambda (G)$ such that $f(q)=p$.
	Moreover, it follows from (2) that there exists a unique limit of $f^{*}(z)$ as $z\to q$ in the Martin compactification of $\Omega (G)$.
	On the other hand, in the Martin compactification of $\Omega (G')$, there are more than two points over a parabolic fixed point (\cite{ShigaKlein} Theorem 1.\ 1 (A), see also \cite{Segawa}).
	Therefore, we may find a non-convergent sequence $\{f^{*}(z_n)\}_{n=1}^{\infty}$ as $z_n\to q$. 
	Thus, we have a contradiction. 

\end{Ex}

\section{A gluing lemma}
In this section, we shall prove the following lemma.
\begin{lemma}
\label{Lemma:Gluing}
	Let $X, Y$ be Riemann surfaces. 
	We consider simple closed curves $\alpha\subset X$ and $\beta\subset Y$ with $X\setminus{\alpha}=X_{1}\sqcup X_2$ and $Y\setminus\beta =Y_1\sqcup Y_2$, respectively.
	Suppose that there exist quasiconformal mappings $f_{i} : X_{i}\to Y_{i}$ $(i=1, 2)$ such that $f_1(\alpha)=f_2(\alpha)=\beta$.
	Then, there exists a quasiconformal mapping $f : X\to Y$. Moreover, the maximal dilatation of $f$ depends only on those of $f_1, f_2$ and the local behavior of those mappings near $\alpha$.
\end{lemma}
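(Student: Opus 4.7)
The strategy is to modify $f_2$ inside a one-sided collar of $\alpha$ so that its boundary trace on $\alpha$ coincides with that of $f_1$, and then paste the two maps together across $\alpha$. Choose collar neighbourhoods $U\supset\alpha$ in $X$ and $V\supset\beta$ in $Y$ that are conformally equivalent to standard round annuli, with $\alpha$ and $\beta$ corresponding to the core circles; by shrinking them I may assume $f_i(U\cap X_i)\subset V\cap Y_i$ for $i=1,2$. Since $\alpha$ and $\beta$ are smooth simple closed curves, each $f_i$ extends continuously to a homeomorphism $\phi_i:\alpha\to\beta$, and expressed in the annular coordinates $\phi_i$ becomes a quasi-symmetric self-homeomorphism of the unit circle---its quasi-symmetry constant is exactly what is meant by the ``local behaviour of $f_i$ near $\alpha$'' in the statement.

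Set $\psi:=\phi_1\circ\phi_2^{-1}:\beta\to\beta$, again quasi-symmetric. The core construction is a quasiconformal self-homeomorphism $\Psi$ of the half-collar $V\cap Y_1$ satisfying $\Psi|_{\beta}=\psi$ and $\Psi|_{\partial V\cap\overline{Y_1}}=\mathrm{id}$. In the annular chart this is produced by applying the Douady--Earle extension $E(\psi)$ (recalled in \S2) on a thin one-sided collar of $\beta$ and then interpolating logarithmically in the radial coordinate between $E(\psi)$ near $\beta$ and the identity near the outer boundary. The maximal dilatation of $\Psi$ is then controlled purely by the quasi-symmetric constant of $\psi$.

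Define
\[
\tilde f_2:=\begin{cases}\Psi\circ f_2 & \text{on } f_2^{-1}(V\cap Y_1),\\ f_2 &\text{elsewhere on }X_2.\end{cases}
\]
This is a quasiconformal map $X_2\to Y_2$ with dilatation bounded by $K(\Psi)\cdot K(f_2)$, and its boundary trace on $\alpha$ is $\psi\circ\phi_2=\phi_1$, i.e.\ it matches that of $f_1$. Hence setting $f:=f_1$ on $X_1\cup\alpha$ and $f:=\tilde f_2$ on $X_2$ gives a well-defined homeomorphism $f:X\to Y$, quasiconformal off the smooth simple closed curve $\alpha$ with dilatation depending only on $K(f_1),K(f_2)$ and the quasi-symmetric constants of $\phi_1,\phi_2$. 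Because smooth Jordan curves are quasiconformally removable, $f$ is then quasiconformal on all of $X$, giving the claim.

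The main obstacle is the construction of $\Psi$: one must verify that the Douady--Earle extension of $\psi$, after the radial interpolation with the identity, retains a dilatation bounded only in terms of the quasi-symmetric constant of $\psi$ (and not, say, of the thickness of the chosen collar). Everything else---continuous boundary extension of quasiconformal maps across smooth arcs, quasi-symmetry of the induced boundary homeomorphism, and removability of smooth curves---is standard.
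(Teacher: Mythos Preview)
Your approach has the right ingredients---Douady--Earle extension and removability---but it rests on an assumption that is not in the hypothesis: you treat $\alpha$ and $\beta$ as \emph{smooth} simple closed curves. The lemma does not assume this; Remark~4.2 in the paper explicitly stresses that smoothness of $\alpha$ is \emph{not} supposed. Two of your steps break without it. First, you cannot choose annular collars in which $\alpha$ and $\beta$ correspond to the core circles: the conformal image of a general Jordan curve inside a round annulus is just another Jordan curve, so there is no parametrisation in which the boundary traces $\phi_i$ become quasi-symmetric circle maps, and hence no control on $\psi=\phi_1\circ\phi_2^{-1}$. Second, an arbitrary simple closed curve need not be quasiconformally removable (there exist Jordan curves of positive area), so your final pasting along $\alpha$ is not justified.

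The paper sidesteps this by never gluing along $\alpha$ itself. It chooses auxiliary curves $\alpha_i\subset X_i$ on either side of $\alpha$, replaces $f_i$ on the half-collars $A_i$ between $\alpha$ and $\alpha_i$ by Douady--Earle extensions (making $f_i$ real analytic there), and then takes the $\alpha_i$ to be \emph{smooth} curves inside those annuli. The interpolation is then carried out across the full annulus $A$ bounded by $\alpha_1$ and $\alpha_2$: both boundary data are lifted simultaneously to a single homeomorphism $\Psi:\mathbb{R}\to\mathbb{R}$ (with $\mathbb{R}_{<0}$ encoding $f_1|_{\alpha_1}$ and $\mathbb{R}_{>0}$ encoding $f_2|_{\alpha_2}$), and the bulk of the argument is the case-by-case verification that this $\Psi$ is quasi-symmetric, particularly near $0$ where the two pieces meet. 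Your ``radial interpolation with the identity'' aims at something similar in spirit, but because it is performed across the possibly wild curve $\alpha$ rather than across a smooth annulus, the foundations are missing. (A minor slip: your $\Psi$ should be a self-map of $V\cap Y_2$, not $V\cap Y_1$, for $\Psi\circ f_2$ to make sense.)
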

\begin{Rem}
Since $\alpha$ is a simple closed curve, the quasiconformal mappings $f_1$ and $f_2$ are extended homeomorphically to $\alpha$.
We use the fact in the statement of the above lemma.
\end{Rem}
\begin{Rem}
	If we suppose that $\alpha$ is piecewise smooth and $f_1, f_2$ agree on $\alpha$, then the conclusion is easy.
	But we do not assume them in this lemma.
\end{Rem}
\begin{proof}
	We take simple closed curves $\alpha_{i}\subset X_i$ $(i=1, 2)$ near $\alpha$ so that $\alpha$ and $\alpha_i$ bound annuli $A_i\subset X_j$.
	We put $B_i=f_i (A_i)$ and $\beta_i=f_i(\alpha_i)$.
	Then, $B_i$ are also annuli, which are bounded by $\beta$ and $\beta_i$ $(i=1, 2)$.
	First of all, we show that $f_1$ and $f_2$ can be real analytic on $\alpha_1$ and $\alpha_2$, respectively.
	
		There exist $r_i, k_i>1$ such that each $A_i$ is conformally equivalent to a circular annulus 
	$$
	\mathcal{A}_{i}:=\{z\in \mathbb C \mid 1<|z|<r_i\}\simeq \mathbb H/<z\mapsto k_iz>
	$$
	 via a conformal mapping $\varphi_i :\mathcal{A}_i\to A_i$ $(i=1, 2)$. 
	 We also take $\rho_i, \kappa_i >1$ so that each $B_i$ is conformally equivalent to a circular annulus
	 \begin{equation*}
	 	\mathcal{B}_i=\{z\in \mathbb C\mid 1<|z|<\rho_i\}\simeq \mathbb{H}/<z\mapsto \kappa_iz >\}
	 \end{equation*}
	 via $\psi_i :\mathcal{B}_i\to B_i$.
	 
	 Then,  $\phi_i :=\psi_i^{-1}\circ f_i|_{A_i}\circ\varphi_i$ from $\mathcal{A}_i$ onto $\mathcal{B}_i$ are lifted to  quasiconformal mappings $\widehat{\phi}_i :\mathbb H\to \mathbb H$ with
	 \begin{equation*}
	 	\widehat{\phi}_i(k_iz)=\kappa_i\widehat{\phi}_i(z)
	 \end{equation*}
	 for any $z\in \overline{\mathbb H}$. In particular,
	 \begin{equation}
	 \label{eqn:realEuiv}
	 	\widehat{\phi}_i(k_ix)=\kappa_i\widehat{\phi}_i(x)
	 \end{equation}
	 holds for any $x\in \mathbb R$.
	 
	 We take the Douady-Earle extension $\widehat \Phi_i$ of $\widehat{\phi}_i|_{\mathbb R}$.
	 Since $\widehat{\phi}_i$ satisfy (\ref{eqn:realEuiv}) on $\mathbb{R}$, $\widehat \Phi_i$ also satisfy the equations on $\overline{\mathbb H}$. 
	 Moreover, they are real analytic in $\mathbb H$.
	 Therefore, the quasiconformal mappings $\widehat \Phi_i : \mathbb H\to \mathbb H$ are projected  quasiconformal mappings $\Phi_i : \mathcal{A}_i\to \mathcal{B}_i$.
	 Hence, $F_i :=\psi_i\circ \Phi_i\circ\varphi_i^{-1} : A_i\to B_i$ are  real analytic quasiconformal mappings with the same boundary values as $f_i|_{A_i}$ $(i=1,2)$.
	 
	 We define quasiconformal mappings $\tilde{f_i}$ from $X_i$ onto $Y_i$
	 by $f_i$ on $X_i\setminus A_i$ and $F_i$ on $A_i\cup\{\alpha_i\}$. They are real analytic in $A_i$.
	 Let $\tilde{\alpha}_i$ be non-trivial smooth Jordan curves in $A_i$. 
	 Then, $\tilde f_i$ are real analytic on $\tilde\alpha_i$.
	 Thus, by considering $\tilde f_i$ and $\tilde{\alpha}_i$ instead of $f_i$ and $\alpha_i$, respectively, we may assume that $f_i$ are real analytic on $\alpha_i$.
	 
	 Now, we consider an annulus $A$ in $X$ bounded by $\alpha_1$ and $\alpha_2$.
	 We also consider an annulus $B$ in $Y$ bounded by $\beta_1 :=f_1(\alpha)$ and $\beta_2 :=f_2(\alpha)$. 
	 We take $r, k>1$ so that $A$ is conformally equivalent to the circular annulus
	 \begin{equation*}
	 	\mathcal{A}:=\{z\in \mathbb C\mid 1<|z|<r \}\simeq \mathbb H/<z\mapsto kz>
	 \end{equation*}
	 via a conformal mapping $h_A : \mathcal{A}\to A$.
	 We also take $\rho, \kappa >1$ so that $B$ is conformally equivalent to the circular annulus
	 \begin{equation*}
	 	\mathcal{B}:=\{z\in \mathbb C\mid 1<|z|<\rho\}\simeq \mathbb{H}/<z\mapsto kz>
	 \end{equation*}
	 via a conformal mapping $h_B :\mathcal B\to B$.
	 
	 We denote by $\pi_A : \overline{\mathbb H}\setminus \{0, \infty\}\to \overline {A} \simeq \overline{\mathbb H}\setminus\{0, \infty\}/<z\mapsto kz>$ and $\pi_B : \overline{\mathbb H}\setminus \{0, \infty\}\to \overline {B} \simeq \overline{\mathbb H}\setminus\{0, \infty\}/<z\mapsto \kappa z>$, the quotient mappings for $\overline{A}$ and $\overline{B}$, respectively.
	 We may assume that $\pi_A (\mathbb R_{<0})=\alpha_1$, $\pi_A (\mathbb R_{>0})=\alpha_2$, $\pi_B (\mathbb R_{<0})=\beta_1$ and $\pi_B (\mathbb R_{>0})=\beta_2$.
	 Then, the smooth homeomorphism $f_1|_{\alpha_1} : \alpha_1 \to \beta_1$ is lifted to a smooth homeomorphism from $\mathbb R_{<0}$ to itself and $f_2|_{\alpha_2} : \alpha_2 \to \beta_2$ is also lifted to a smooth homeomorphism from $\mathbb R_{>0}$ to itself.
	 Thus, we have a strictly increasing homeomorphism $\Psi$ on $\mathbb R$ onto itself which are smooth in $\mathbb R\setminus\{0\}$ with $\Psi (0)=0$.
	 The mapping $\Psi$ satisfies
	 \begin{equation}
	 \label{eqn:realEquiv2}
	 	\Psi (kx)=\kappa \Psi (x)
	 \end{equation}
	 for any $x\in \mathbb R$.
	 
	 We may normalize the function as $\Psi (1)=1$ and $\Psi (-1)=-1$.
	 We show that $\Psi$ is quasi-symmetric on $\mathbb R$.
	 
	 We put
	 \begin{equation*}
	 	M=\sup_{x>0, t>0}\frac{\Psi (x)-\Psi(x-t)}{\Psi (x+t)-\Psi (x)}
	 \end{equation*}
	 and
	 \begin{equation*}
	 	m=\inf_{x>0. t>0}\frac{\Psi (x)-\Psi(x-t)}{\Psi (x+t)-\Psi (x)}.
	 \end{equation*}
	 We show that $0<m\leq M<\infty$ in several steps.
	 
	 If $x=k y$ and $t=k s$ $(s>1)$, then we have from (\ref{eqn:realEquiv2})
	 \begin{eqnarray*}
	 	\Psi (x)-\Psi (x-t)&=&\Psi (k y)-\Psi (k (y-s))=\kappa (\Psi (y)-\Psi (y-s)),\\
	 	\Psi (x+t)-\Psi (x)&=&\kappa (\Psi (y+s)-\Psi (y)).	 	
	 \end{eqnarray*}
	 Thus, we see
	 \begin{equation}
	 	M=\sup_{x\in\{0\}\cup [1, k], t>0}\frac{\Psi (x)-\Psi(x-t)}{\Psi (x+t)-\Psi (x)}
	 \end{equation}
	 and
	 \begin{equation}
	 		 	m=\inf_{x\in\{0\}\cup [1, k], t>0}\frac{\Psi (x)-\Psi(x-t)}{\Psi (x+t)-\Psi (x)}.
	 \end{equation}
	 \noindent
	 (i) If $x\in [1, k]$ and $0<t\leq \frac{1}{2}$, then we have
	 \begin{equation*}
	 	\Psi (x)-\Psi (x-t)=\Psi'(x-\theta t)t
	 \end{equation*}
	 and
	 \begin{equation*}
	 	\Psi (x+t)-\Psi (x)=\Psi'(x+\theta' t)t
	 \end{equation*}
	 for some $\theta, \theta'\in (0, 1)$.
	 Thus,
	 \begin{equation*}
	 	\frac{\Psi (x)-\Psi(x-t)}{\Psi (x+t)-\Psi (x)}=\frac{\Psi'(x-\theta t)}{\Psi' (x+\theta' t)}.
	 \end{equation*}
	 Since $x\in [1, k]$, 
	 \begin{equation*}
	 	\frac{1}{2}\leq x-\theta t<x+\theta' t\leq k+\frac{1}{2}.
	 \end{equation*}
	 We conclude that there exist $0<m_1<M_1<\infty$ such that
	 \begin{equation}
	 	m_1\leq \frac{\Psi (x)-\Psi(x-t)}{\Psi (x+t)-\Psi (x)}\leq M_1
	 \end{equation}
	 for any $x\in [1, k]$ and $t\in (0, \frac{1}{2}]$.
	 
	 \medskip
	 
	 \noindent
	 (ii) If $\frac{1}{2}<t<x$, we have
	 \begin{equation*}
	 	\Psi (x)-\Psi (x-t)\leq \Psi (x)\leq \Psi (k)=\kappa
	 \end{equation*}
	 and
	 \begin{equation*}
	 	\Psi (x+t)-\Psi (x)\geq \Psi \left (x+\frac{1}{2}\right )-\Psi (x).
	 \end{equation*}
	 For $\widetilde{m}_2=\inf_{1\leq x\leq k}\left\{\Psi \left (x+\frac{1}{2}\right )-\Psi (x)\right\}>0$, we get
	 \begin{equation*}
	 	\frac{\Psi (x)-\Psi(x-t)}{\Psi (x+t)-\Psi (x)}\leq\frac{\kappa}{\widetilde{m}_2} <\infty .
	 \end{equation*}
	 Also, we have
	 \begin{equation*}
	 	\Psi (x)-\Psi (x-t)\geq \Psi (x)-\Psi \left (x-\frac{1}{2}\right )
	 \end{equation*}
	 and
	 \begin{equation*}
	 	\Psi (x+t)-\Psi (x)\leq \Psi (x+t)\leq \Psi (2x)\leq \Psi (2k)=\kappa \Psi (2).
	 \end{equation*}
	 For $\widehat{m}_2=\inf_{1\leq x\leq k}\left\{\Psi (x)-\Psi \left (x-\frac{1}{2}\right )\right\}>0$, we get
	 \begin{equation*}
	 	\frac{\Psi (x)-\Psi(x-t)}{\Psi (x+t)-\Psi (x)}\geq \frac{\widehat{m}_2}{\kappa\Psi (2)}>0.
	 \end{equation*}
	 \medskip
	 
	 \noindent
	 (iii) If $x\in [1, k]$ and $t\geq \frac{1}{2}$, then we put
	 	 \begin{equation*}
	 	M_3=\sup_{1\leq x\leq k, t\geq \frac{1}{2}}\frac{\Psi (x)-\Psi(x-t)}{\Psi (x+t)-\Psi (x)}
	 \end{equation*}
	 and
	 \begin{equation*}
	 	m_3=\inf_{1\leq x\leq k, t\geq \frac{1}{2}}\frac{\Psi (x)-\Psi(x-t)}{\Psi (x+t)-\Psi (x)}.
	 \end{equation*}
	 We take sequences $\{x_n\}, \{t_n\}$ so that $x_n\in [1, k], t_n\geq \frac{1}{2}$ and
	 \begin{equation*}
	 	\lim_{n\to\infty}\frac{\Psi (x_n)-\Psi(x_n-t_n)}{\Psi (x_n+t_n)-\Psi (x_n)}=M_3.
	 \end{equation*}
	 If $\{t_n\}$ is bounded, it is obvious that $M_3<\infty$.
	 We suppose that $\{t_n\}$ is unbounded.
	 Since $x_n\in [1, k]$, we have
	 \begin{equation*}
	 	x_n - t_n\in [1-t_n, k-t_n], \quad x_n+t_n\in [1+t_n, k+t_n].
	 \end{equation*}
	 Hence, we have
	 \begin{equation*}
	 	\Psi (x_n)-\Psi (x_n-t_n)\leq \Psi (k)-\Psi (1-t_n)=\kappa-\Psi(1-t_n) ,
	 \end{equation*}
	 and
	 \begin{equation*}
	 	\Psi (x_n+t_n)-\Psi (x_n)\geq \Psi (1+t_n)-\Psi (k)=\Psi(1+t_n)-\kappa .
	 \end{equation*}
	  We take $m(n)\in \mathbb N$ such that
	 \begin{equation*}
	 	k^{m(n)}\leq t_n\leq k^{m(n)+1}.
	 \end{equation*}
	 Note that $m(n)\to \infty$ as $n\to \infty$. Then
	 \begin{equation*}
	 	\Psi (1-t_n)\geq \Psi (-t_n)\geq \Psi (-k^{m(n)+1})=\kappa^{m(n)+1}\Psi (-1)=-\kappa^{m(n)+1},
	 \end{equation*}
	 	and
	 \begin{equation*}
	 	\Psi (1+t_n)\geq \Psi (k^{m(n)})=\kappa^{m(n)}.
	 \end{equation*}
	 Thus, we have
\begin{equation*}
	\frac{\Psi (x_n)-\Psi(x_n-t_n)}{\Psi (x_n+t_n)-\Psi (x_n)}\leq \frac{1+\kappa^{m(n)}}{\kappa^{m(n)-1}-1},
\end{equation*}
	and we get
 \begin{equation*}
 	M_3\leq  \kappa,
 \end{equation*} 
 as $m(n)\to \infty$.	
 A similar argument shows that $m_3 >0$.
 
 Thus, we conclude that $0<m<M<\infty$.
 By using the same argument as above, we can show that
 \begin{equation*}
 	0<\inf_{x<0, 0<t}\frac{\Psi (x)-\Psi(x-t)}{\Psi (x+t)-\Psi (x)}\leq \sup_{x<0, 0<t}\frac{\Psi (x)-\Psi(x-t)}{\Psi (x+t)-\Psi (x)}<\infty.
 \end{equation*}
 
 \medskip
 
 \noindent
 (iv) If $x=0$, we have
\begin{eqnarray*}
	 	\kappa^{n-1} &\leq \Psi (t)& \leq \kappa^{n} \\
-\kappa^{n}=\kappa^{n}\Psi (-1)&\leq \Psi (-t)&\leq \kappa^{n-1}\Psi(-1)=-\kappa^{n-1}
\end{eqnarray*}
if $k^{n-1}\leq t\leq k^{n}$ $(n\in \mathbb N)$. Hence,
\begin{equation*}
	\kappa^{-1}\leq \frac{\Psi (0)-\Psi (-t)}{\Psi (t)-\Psi (0)}\leq \kappa .
\end{equation*}
The same argument gives us the same estimate for $\kappa^{-n}\leq t\leq \kappa^{-n+1}$ $(n\in \mathbb N)$.

It follows from (i) -- (iv) that $\Psi$ is quasi-symmetric on $\mathbb R$.

\medskip

Now, we take the Douday-Earle extension $E({\Psi})$ of $\Psi$. 
It is a quasiconformal self-mapping of $\mathbb H$ because of the quasi-symmetricity of $\Psi$.
Since $\Psi$ satisfies (\ref{eqn:realEquiv2}), the equation
\begin{equation*}
	E({\Psi})(k z)=\kappa E({\Psi})(z)
\end{equation*}
also holds for any $z\in \mathbb H$.
Therefore, $E({\Psi})$ is projected to a quasiconformal mapping $\psi$ from $A$ to $B$. 
Moreover, we have $\psi|_{\alpha_1}=f_1|_{\alpha_1}$, $\psi|_{\alpha_2}=f_2|_{\alpha_2}$.
We define a map $f : X\to Y$ by
\begin{equation*}
	f(p)=
	\begin{cases}
	f_i(p)\quad (p\in X_i\setminus A; i=1, 2) \\
	\psi (p)\quad (p\in \overline{A})	.
	\end{cases}
\end{equation*}
The map $f$ is a homeomorphism and quasiconformal except on $\alpha_1\cup\alpha_2$. 
 It follows from the removability for quasiconformal mapping that $f$ is quasiconformal on $X$.
 Moreover, from the construction we see that the maximal dilatation of $f$ depends only on those of $f_i$ and the local behavior of them near $\alpha$.	
 \end{proof}
 
 \section{Conditions for the quasiconformal equivalence \\ of Riemann surfaces}
 Let $R_1, R_2$ be open Riemann surfaces which are homeomorphic to each other.
 Suppose that $R_1$ and $R_2$ are quasiconformally equivalent {near the ideal boundaries}, namely,
 there exist compact subsets $K_j$ of $R_j$ $(j=1, 2)$ and a quasiconformal mapping such that $f(R_1\setminus K_1)=R_2\setminus K_2$.
 As we have seen in the previous section, the quasiconformal equivalence near the ideal boundaries does not imply the quasiconformal equivalence of the surfaces in general.
 In this section, we will give sufficient conditions for two open Riemann surfaces which are  quasiconformally equivalent near the ideal boundaries to be quasiconformally equivalent.
 
 We say that an open Riemann surface $R$ admits a \emph{bounded pants decomposition} if there exists a pants decomposition $\{P_n\}_{n=1}^{\infty}$ of $R$ such that each $P_n$ is bounded by hyperbolic closed geodesics and the lengths of the geodesics are in $[M^{-1}, M]$, where $M>0$ is a constant independent of $n$.
 
 \begin{Def}
 	Let $E$ be an end of an open Riemann surface $R$.
 	We say that $E$ is an {\it infinite ladder end} (ILE) if $E$ is an end of infinite genus having a bounded pants decomposition $\{P_n\}_{n=1}^{\infty}$ given by the dotted lines as in Figure \ref{FigILE}.
 \end{Def}
 
 \begin{figure}
\centering
\includegraphics[width=14cm]{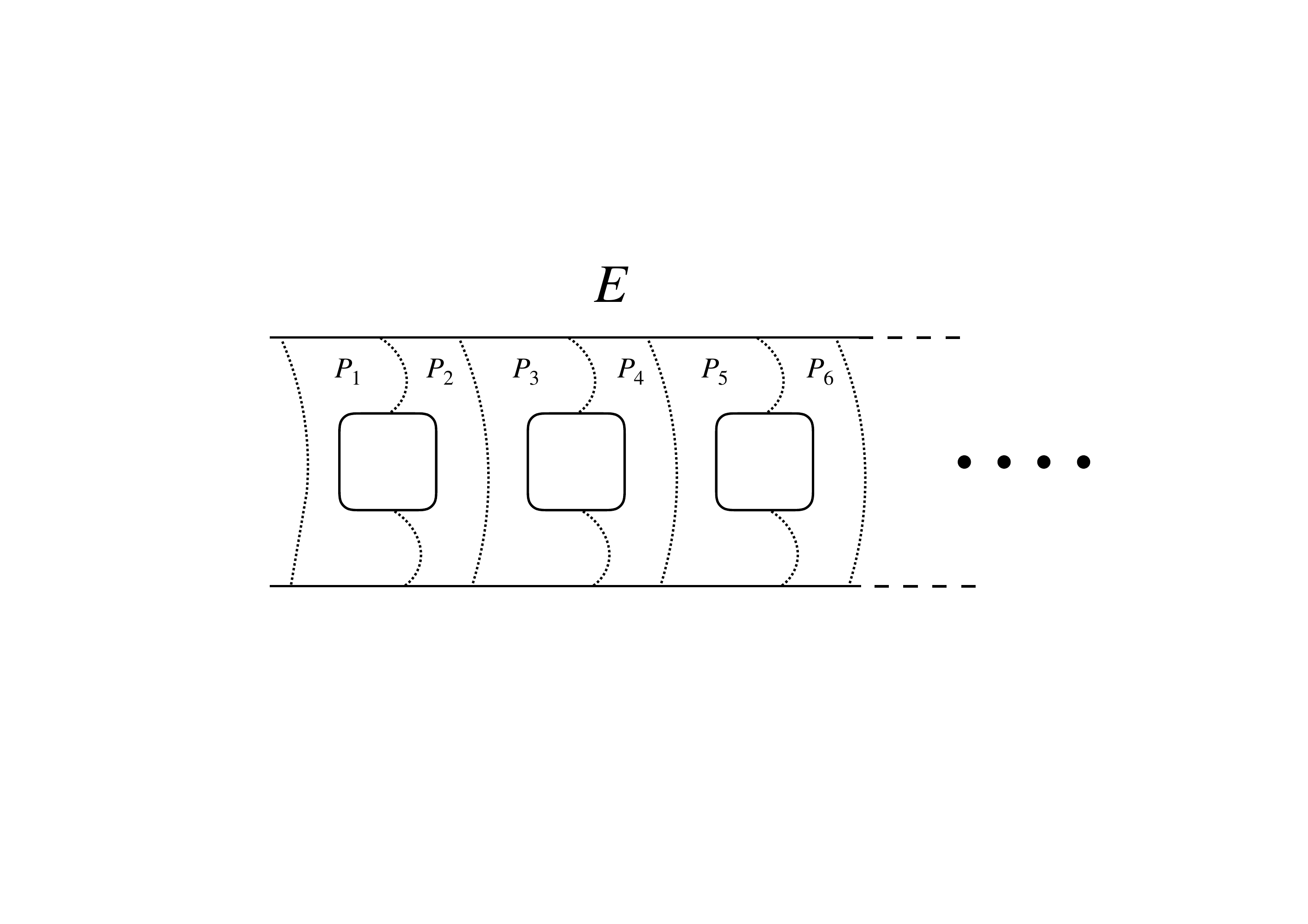}
\caption{}
\label{FigILE}
\end{figure}
 
 \begin{thm}
 	Let $R_1, R_2$ be homeomorphic open Riemann surfaces which are quasiconformally equivalent near the ideal boundaries.
 	\begin{enumerate}
 		\item If the genus of $R_1$ is finite, then $R_1$ and $R_2$ are quasiconformal equivalent.
 		\item If $R_1$ has an ILE, then $R_1$ and $R_2$ are quasiconformally equivalent.
 	\end{enumerate}
 	
 \end{thm}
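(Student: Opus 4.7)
The plan is to build a quasiconformal map $F : R_1 \to R_2$ by decomposing each $R_j$ as $\tilde K_j \cup (R_j \setminus \tilde K_j)$ along a finite family of simple closed curves, producing a quasiconformal map on each of the two pieces, and patching them together with Lemma \ref{Lemma:Gluing}. In both parts the map on $R_j \setminus \tilde K_j$ will essentially be (a modification of) $f$, and the map on the compact piece $\tilde K_j$ will come from the finite-dimensional Teichm\"uller theory of compact bordered Riemann surfaces: any two such surfaces of the same topological type are quasiconformally equivalent.

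For (1), since the genus of $R_1$ is finite we enlarge $K_1$ to a smoothly bounded compact subsurface $\tilde K_1$ containing every handle of $R_1$, and set $\tilde K_2 := R_2 \setminus f(R_1 \setminus \tilde K_1)$, which is likewise compact and smoothly bounded. Then $R_1 \setminus \tilde K_1$ is a disjoint union of planar ends, and $f$ remains quasiconformal on it. Since $R_1 \cong R_2$ topologically and $R_1 \setminus \tilde K_1 \cong R_2 \setminus \tilde K_2$ via $f$ with boundary circles in correspondence, the compact bordered surfaces $\tilde K_1, \tilde K_2$ have the same finite genus and the same number of boundary circles, hence are homeomorphic. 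Pick a quasiconformal $g : \tilde K_1 \to \tilde K_2$ sending each boundary circle $\alpha$ of $\tilde K_1$ to the boundary circle $f(\alpha)$ of $\tilde K_2$, and glue $f$ with $g$ along each such $\alpha$ via Lemma \ref{Lemma:Gluing}.

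For (2), the ILE $E_1 \subset R_1$ may be arranged disjoint from $K_1$, with bounded pants decomposition $\{P_n\}_{n=1}^{\infty}$. By Wolpert's inequality the geodesics in $R_2$ homotopic to $f(\partial P_n)$ have uniformly bounded length, so $f(E_1)$ is an ILE of $R_2$ carrying a bounded pants decomposition $\{Q_n\}$. The genera of $K_1$ and $K_2$ need not match; to correct this we enlarge each by absorbing a finite initial segment of its ILE, setting $\tilde K_1 := K_1 \cup P_1 \cup \cdots \cup P_{N_1}$ and $\tilde K_2 := K_2 \cup Q_1 \cup \cdots \cup Q_{N_2}$. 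Because each additional layer of the ladder raises the genus while preserving the number of boundary circles, one can choose $N_1, N_2$ so that $\tilde K_1$ and $\tilde K_2$ are homeomorphic compact bordered Riemann surfaces. On the complementary ILE tails one constructs a quasiconformal $\tilde f : R_1 \setminus \tilde K_1 \to R_2 \setminus \tilde K_2$ pants-by-pants, using that any two hyperbolic pants with boundary geodesics of uniformly bounded length are quasiconformally equivalent and patching via Lemma \ref{Lemma:Gluing} along each intermediate geodesic; on any non-ILE ends of $R_1 \setminus K_1$ the original $f$ already supplies the required map. Finally, (1)'s argument produces a quasiconformal $\tilde g : \tilde K_1 \to \tilde K_2$, and a last application of Lemma \ref{Lemma:Gluing} yields $F$.

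The hard part will be the combinatorics in (2): verifying that the integers $N_1, N_2$ can genuinely be chosen so as to realise $\tilde K_1 \cong \tilde K_2$ (accounting for both genus and boundary components in the presence of possibly many non-ILE ends), and controlling the accumulating dilatations in the pants-by-pants construction of $\tilde f$ so that the composed map has uniformly bounded maximal dilatation. The essential geometric input driving (2) is the uniform geometry of the ladder, which enables an arbitrary finite redistribution of handles between the compact interior and the infinite-type end without changing the quasiconformal type of either piece.
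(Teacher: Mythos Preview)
Your proposal is correct and follows essentially the same strategy as the paper: cut each surface along finitely many simple closed curves into a compact core and complementary ends, match the cores by finite-type Teichm\"uller theory and the ends by the given $f$ (or a pants-by-pants map on the ILE), and glue via Lemma~\ref{Lemma:Gluing}. The only cosmetic difference is in~(2): the paper absorbs pants into just one side---adding $P_1,\dots,P_{2m}$ to the smaller-genus core to make up the genus defect $m$---rather than into both as you propose, and your flagged concerns about the combinatorics and the uniform dilatation bound are resolved exactly as you anticipate, using the bounded-geometry hypothesis on the ILE together with the dilatation clause of Lemma~\ref{Lemma:Gluing}.
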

 \begin{proof}
 From the assumption, there exist compact subsets $K_i$ of $R_i$ $(i=1, 2)$ and a quasiconformal mapping $f$ on $R_1\setminus K_1$ such that $f(R_1\setminus K_1)=R_2\setminus K_2$.
 
 \medskip
 	(1)
 	Let $R_1=\cup_{n=1}^{\infty}W_n$ be a regular exhaustion of $R_1$. 
 	Each $W_n$ is a relatively compact subregion of $R_1$ bounded by a finite number of mutually disjoint simple closed curves, and every connected component of the complement of $W_n$ is not relatively compact in $R_1$. 
 	Hence, there exists $N\in \mathbb N$ such that $K_1\subset W_N$ and the genus of $W_N$ is the same as that of $R_1$.
 	Also, the number of the connected components of $R_1\setminus W_N$ is not more than that of the boundary components of $W_N$.
 	Thus, it has to be finite.
 	
 	Let $E_1, \dots , E_k$ be the set of connected components of $R_1\setminus W_N$.
 	Since $W_N$ is of the same genus as $R_1$, every $E_j$ is a planar and so is $f(E_j)$.
 	Hence, we may take a simple closed curve $\alpha_j$ in $E_j$ which separates the ideal boundary of $E_j$ and the relative boundaries of $E_j$.
 	We see that there is a unique connected component of $R_2\setminus\cup_{j=1}^{k}f(\alpha_j)$ which is relatively compact in $R_2$.
 	
 	Indeed, if there are two relatively compact connected components in $R_2\setminus\cup_{j=1}^{k}f(\alpha_j)$, then each of them together with its connected components of the complement is a subdomain of $R_2$ with no relative boundaries.
 	It is absurd because of the connectivity of $R_2$.
 	It has to be unique.
 	
 	We denote by $S_2$ the relatively compact connected component of $R_2\setminus\cup_{j=1}^{k}f(\alpha_j)$.
 	It is also seen that there is a unique connected component of $R_1\setminus\cup_{j=1}^{k}\alpha_j$.
 	The component is denoted by $S_1$.
 	Then, both $S_1$ and $S_2$ are open Riemann surfaces of the same genus bounded by the same number of simple closed curves.
 	Hence, they are quasiconformally equivalent as well as their complements.
 	Thus, we see from Lemma \ref{Lemma:Gluing} that $R_1$ and $R_2$ are quasiconformally equivalent.
 	
 \medskip
 	(2) 
 	Let $E\subset R_1$ be an ILE of $R_1$ with a bounded pants decomposition $\{P_n\}_{n=1}^{\infty}$ as \textsc{Figure} \ref{FigILE} shows. 
 	Every boundary curve of $P_n$ $(n\in \mathbb N)$ is the hyperbolic geodesic whose length is in $[M^{-1}, M]$ for some $M>0$ independent of $n$.
 	
 	From the assumption, there exist compact subset $K_i$ of $R_i$ $(i=1, 2)$ and a quasiconformal mapping $f : R_1\setminus K_1\to R_2\setminus K_2$.
 	We may assume that $K_1$ is the closure of a regular region $S_1$ of $R_1$ and $E$ is a connected component of $R_1\setminus S_1$.
 	We put $S_2=R_2\setminus f(R_1\setminus S_1)$.
 	
 	Since $K_1=\overline{S_1}$, the boundary $\partial K_1=\partial S_1$ consists of finitely many Jordan curves in $R_1$.
 	Hence, so is $f(\partial K_1)=\partial S_2$.
 	In particular, the number of boundary components of $S_2$ are the same as that of $S_1$.
 	If the genus of $S_2$ is the same as that of $S_1$, then $S_1$ and $S_2$ are quasiconformally equivalent.
 	Thus, it follows from Lemma \ref{Lemma:Gluing} that $R_1$ and $R_2$ are quasiconformally equivalent.
 	
 	Suppose that the genus of $S_2$ is greater than the genus of $S_1$ and let $m\in \mathbb N$ be the difference of them.
 	For  a bounded pants decomposition $\{P_n\}_{n=1}^{\infty}$ of $E$ as \textsc{Figure} \ref{FigILE}, pairs of pants $P_1, \dots , P_{2m}$ makes a regular region $W_m$ of genus $m$ with two boundary components.
 	By gluing $S_1$ and $W_m$, we get a regular region $S_1'$ of the same genus as that of $S_2$.
 	We also see that $S_1'$ is bounded by the same number of closed curves as $S_2$.
 	Therefore, $S_1'$ and $S_2$ are quasiconformally equivalent.
 	
 	Now, we consider an end $E_m := E\setminus \cup_{n=1}^{2m}\overline{P_n}$.
 	The end $E_m$ is still an ILE end with a bounded pants decomposition $\{P_n\}_{n\geq m+1}$.
 	On the other hand, the end $E':=f(E)$ is also an ILE and it admits a bounded pants decomposition $\{P'_n\}_{n=1}^{\infty}$ as \textsc{Figure} \ref{FigILE}.
 	It follow from Wolpert's formula that the hyperbolic length of any boundary curve of $P'_n$ is in $[K(f)^{-1}M^{-1}, K(f)M]$, where $K(f)$ is the maximal dilatation of $f$.
 	Therefore, $P_i$ and $P'_j$ are quasiconformally equivalent for any $i\geq m+1$ and for any $j\in \mathbb N$.
 	We may also see that the maximal dilatations of quasiconformal mappings from $P_i$ onto $P_j'$ $(i\geq m+1, j\in \mathbb N)$ can be uniformly bounded.
 	From Lemma \ref{Lemma:Gluing} we see that $E_m$ and $E'$ are quasiconformally equivalent.
 	
 	From the assumption, $R_1\setminus (S_1'\cup E_m)$ and $R_2\setminus (S_2\cup E')$ are quasiconformally equivalent.
 	By using Lemma \ref{Lemma:Gluing} again, we conclude that $R_1$ and $R_2$ are quasiconformally equivalent.
 	
 	The same argument works for $f^{-1}$ when the genus of $S_1$ is greater than the genus of $S_2$. 
 	Thus, we complete the proof of the theorem.

 \end{proof}
 
 \section{A universality of Schottky regions \\ and the universal Schottky space}

 \label{sec:universal}
 Let $G_g$ $(g>1)$ be a Schottky group of genus $g$. 
 Then, the limit set $\Lambda (G_g)$ of $G_g$ is a Cantor set in $\widehat{\mathbb C}$.
 We call the complement $\Omega (G_g)$ of $\Lambda (G_g)$, which is the region of discontinuity of $G_g$, a \emph{Schottky region} for genus $g$.
 
 Let $\Omega (G_g')$ be another Schottky region for the same genus $g$. Then the quotient surfaces $X:=\Omega (G_g)/G_g$, $X':=\Omega (G'_g)/G'_g$ are compact Riemann surfaces of genus $g$.
 We see that there is a quasiconformal mapping from $X$ onto $X'$ and the mapping is lifted to a group equivariant quasiconformal map from $\Omega (G_g)$ onto $\Omega (G'_g)$.
 Therefore, Schottky regions
 $\Omega (G_g)$ and $\Omega (G'_g)$ for genus $g$ are quasiconformal equivalent as open Riemann surfaces of infinite type.
 In fact, the quasiconformal mapping is extended to a quasiconformal mapping on $\widehat{\mathbb C}$.
 
 We also see in Example \ref{Ex:nonQC} that for a Kleinian group $G'$ of Schottky type with cusps, $\Omega (G_g)$ and $\Omega (G')$ are not quasiconformally equivalent while both are the complements of some Cantor sets.

Now, we consider a Schottky group $G_h$ of genus $h\not= g$.
Of course, there are no group equivariant quasiconformal mappings between $\Omega (G_g)$ and $\Omega (G_h)$ since those groups represent topologically different Riemann surfaces.
However, it may be possible that $\Omega (G_g)$ and $\Omega (G_h)$ are quasiconformally equivalent as open Riemann surfaces.
In fact, it is always possible.
We may show the following:
\begin{thm}
\label{thm:universal}
Schottky regions are quasiconformally equivalent to each other.
More precisely, for any Schottky groups $G, G'$ there exists a quasiconformal mapping $f$ on $\widehat{\mathbb C}$ such that $f(\Omega (G))=\Omega (G')$.
\end{thm}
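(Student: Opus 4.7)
The equal-genus case was settled in the discussion immediately preceding the theorem: a quasiconformal homeomorphism of the compact quotients lifts equivariantly to $\widehat{\mathbb C}$. Assume therefore $g := \operatorname{genus}(G) \neq h := \operatorname{genus}(G')$. The same-genus result further permits us to replace $G, G'$ by any convenient Schottky groups of the same respective genera.

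The plan is to invoke part (1) of the theorem from Section 5, whose hypotheses demand (i) a homeomorphism between $\Omega(G)$ and $\Omega(G')$, (ii) finite genus of one of them, and (iii) quasiconformal equivalence near the ideal boundaries. Items (i) and (ii) are automatic: $\Omega(G)$ and $\Omega(G')$ are planar (hence of genus $0$), and any two tame Cantor sets in $\widehat{\mathbb C}$ are ambiently homeomorphic, so $\Omega(G)$ and $\Omega(G')$ are homeomorphic. The essential task is to verify (iii).

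For (iii), the key fact I would exploit is that each of the closed quotients $X_g := \Omega(G)/G$ and $X_h := \Omega(G')/G'$ admits a (finite) pants decomposition; lifting, $\Omega(G)$ and $\Omega(G')$ each inherit a \emph{bounded pants decomposition} --- countably many pants of only finitely many isometry types, with boundary-geodesic lengths confined to a fixed finite set. Choose a compact ``core'' $K \subset \Omega(G)$ consisting of a suitable finite union of these pants, and similarly $K' \subset \Omega(G')$, arranging that $\Omega(G) \setminus K$ and $\Omega(G') \setminus K'$ have the same finite number of components. Each such component is an infinite, bounded-geometry union of pants. Matching the components one-to-one, I would construct the quasiconformal map on each matched pair pants-by-pants: any two pants with boundary-geodesic lengths drawn from a bounded range $[M^{-1},M]$ are quasiconformally equivalent with dilatation controlled by $M$. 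The pants-by-pants maps are arranged to be real analytic on collars of the boundary geodesics (using the Douady--Earle trick from the proof of Lemma \ref{Lemma:Gluing}), and are then successively glued along their shared boundary geodesics by iterating Lemma \ref{Lemma:Gluing}. This delivers a quasiconformal $f \colon \Omega(G) \setminus K \to \Omega(G') \setminus K'$, establishing (iii).

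Once (i)--(iii) are in place, part (1) of the Section 5 theorem produces a quasiconformal $F \colon \Omega(G) \to \Omega(G')$. By the extension result of \cite{ShigaKlein} invoked in Example \ref{Ex:nonQC}, this $F$ between regions of discontinuity of Kleinian groups extends to a quasiconformal self-map of $\widehat{\mathbb C}$, which is the required mapping. The main obstacle is step (iii): iterating the gluing lemma over countably many pants requires uniform control on the ``local behavior near $\alpha$'' constants that appear in its conclusion, and one must verify that the bounded geometry of the pants decomposition (together with the real-analytic collar normalization) indeed forces this uniform control rather than allowing the dilatation to degrade across successive generations of the gluing tree.
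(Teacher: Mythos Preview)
Your approach is correct and genuinely different from the paper's. The paper constructs a single ``universal'' planar surface $X_\infty$---an infinite trivalent tree of copies of one fixed pair of pants with unit boundary lengths---and shows directly, for every Schottky group $G$ of any genus $g$, that $\Omega(G)$ is quasiconformally equivalent to $X_\infty$; transitivity then finishes. The trick is to regroup the pants of $X_\infty$ into blocks $\mathcal F_g$ of $2g-1$ pants each, so that $\mathcal F_g$ is a $2g$-holed sphere matching the fundamental domain $F_g$ of $G$; the tiling of $\Omega(G)$ by $G$-translates of $F_g$ then matches a tiling of $X_\infty$ by copies of $\mathcal F_g$, and Lemma~\ref{Lemma:Gluing} applies with only finitely many local boundary behaviors. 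You instead compare $\Omega(G)$ and $\Omega(G')$ directly and route through Theorem~5.1(1), placing the real work into hypothesis (iii). Since establishing (iii) already requires the same infinite tree-of-pants gluing with uniform dilatation control, the detour through Theorem~5.1(1) is organizationally convenient but does not shorten the argument. The paper's route has the side benefit that the explicit model $X_\infty$ is reused in the proof of Theorem~\ref{thm:universalS}.

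One point in your outline needs care: ``lifting'' a pants decomposition of $X_g=\Omega(G)/G$ to $\Omega(G)$ is not automatic. A simple closed curve $\gamma\subset X_g$ has closed lifts in $\Omega(G)$ only when $[\gamma]$ lies in the kernel of $\pi_1(X_g)\to G$; otherwise its preimage consists of arcs, and the preimage of a pair of pants can be an infinite cover rather than a disjoint union of pants. You must therefore choose a pants decomposition of $X_g$ containing the $g$ Schottky meridians (the projections of $C_1,\dots,C_{2g}$) and then add $2g-3$ curves inside the resulting $2g$-holed sphere; all $3g-3$ curves then bound disks in the handlebody, hence lie in the kernel, and each pair of pants lifts to disjoint conformal copies of itself. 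With that choice your step (iii) goes through: the dual graph of the lifted decomposition is the infinite $3$-regular tree for both $\Omega(G)$ and $\Omega(G')$, the complements of finite cores are combinatorially identical half-trees, and the finitely many pants types (and hence finitely many boundary behaviors) give exactly the uniform bound that you correctly flag as the main obstacle.
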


As an immediate consequence, we have the following universality of Teichm\"uller spaces of Schottky regions.
\begin{Cor}
	For any $g, h >1$, the Teichm\"uller space of a Schottky region of genus $g$ and the Teichm\"uller space of a Schottky region of genus $h$ are the same.
\end{Cor}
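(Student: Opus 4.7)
The plan is to reduce to the case of Schottky groups $G, G'$ of different genera $g \neq h$, since the equal-genus case has already been handled at the start of this section by lifting a quasiconformal map from the compact quotient. My strategy is to equip both $\Omega(G)$ and $\Omega(G')$ with a common combinatorial skeleton — a bounded pants decomposition — and to assemble a global quasiconformal map piece by piece using Lemma~\ref{Lemma:Gluing}. I first fix a hyperbolic pants decomposition of the compact quotient $X = \Omega(G)/G$, whose boundary geodesics all have length in some bounded range $[M^{-1}, M]$ because $X$ is compact. Lifting it to $\Omega(G)$ produces a $G$-invariant bounded pants decomposition in which every pants is isometric to one of the finitely many model pants in $X$, and I do the same for $\Omega(G')$.

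Next I would match the two decompositions combinatorially. Because $\Omega(G)$ is planar, every simple closed curve in it separates, so the dual graph $T$ of the lifted pants decomposition is a tree; every pants gives a vertex of degree $3$, and every pants curve is shared by two distinct pants upstairs (the identifications in the closed quotient are undone in the lift), so $T$ is the infinite $3$-regular tree with no leaves. The planar embedding of $\Omega(G)$ in $\widehat{\mathbb{C}}$ induces a cyclic order of edges at each vertex of $T$, and similarly for $\Omega(G')$. Any two $3$-regular trees equipped with such cyclic orders are isomorphic, so I fix a planar tree isomorphism $\Phi \colon T \to T'$ normalized to send a chosen basepoint pants to a chosen basepoint pants.

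Now I construct the quasiconformal map pants by pants. For each pants $P$ in $\Omega(G)$ with image $P' = \Phi(P)$ in $\Omega(G')$, the boundary lengths lie in the bounded range inherited from the two decompositions, so the conformal moduli of $P$ and $P'$ lie in a compact set; hence there is a quasiconformal map $\varphi_P \colon P \to P'$ of maximal dilatation bounded by some universal constant $K_0$, sending each boundary curve onto the target curve specified by $\Phi$. The individual $\varphi_P$ will not agree on shared boundary curves, but any two adjacent ones do send the common curve $\alpha$ onto the same target curve $\beta$, which is exactly the hypothesis of Lemma~\ref{Lemma:Gluing}. Choosing pairwise disjoint annular collars around the countably many pants curves (available from the bounded geometry via the collar theorem), I apply the gluing construction inside each collar simultaneously; because the local behavior of the $\varphi_P$'s near every pants curve falls into only finitely many isometric patterns, the dilatations introduced are uniformly bounded, producing a quasiconformal map $f \colon \Omega(G) \to \Omega(G')$ with a uniform bound on its maximal dilatation.

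Finally I would extend $f$ to a quasiconformal self-map of $\widehat{\mathbb{C}}$ with $f(\Lambda(G)) = \Lambda(G')$. Since the Schottky generators are contracting in the spherical metric and $\Phi$ preserves tree-levels, the pants matched at depth $n$ on both sides have spherical diameter decaying exponentially in $n$; this forces $f$ to extend continuously across the Cantor limit set to a homeomorphism of $\widehat{\mathbb{C}}$. To upgrade continuity to quasiconformality, I would truncate at tree-depth $n$, retaining $f$ on the union of pants of depth at most $n$ and extending quasiconformally through the finitely many remaining outer Jordan regions to their counterparts (they are Jordan domains of controlled geometry, so admit such an extension with dilatation irrelevant to the limit), and then pass to a subsequential limit using the normality of families of uniformly $K$-quasiconformal mappings. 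The hardest step will be this last one: maintaining a uniform dilatation bound through the infinite gluing procedure and then through the passage to the limit across the Cantor set. Both rely crucially on the uniform geometric data coming from the compactness of the quotients $X$ and $X'$.
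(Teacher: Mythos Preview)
Your overall strategy is sound and close in spirit to the paper's, but the execution differs and there is one genuine gap. The paper does not work directly between $\Omega(G)$ and $\Omega(G')$; instead it builds a single model surface $X_{\infty}$ (an infinite gluing of copies of one fixed hyperbolic pair of pants with all boundary lengths equal to $1$) and shows that every Schottky region is quasiconformally equivalent to $X_{\infty}$. The building block on the Schottky side is not a pair of pants but the full fundamental domain $F_g$, a $2g$-holed sphere, and the combinatorial skeleton is the Cayley tree of the free group $G$ (a $2g$-regular tree) rather than the $3$-regular dual tree of a pants decomposition. The paper matches $F_g$ with a subsurface $\mathcal F_g\subset X_{\infty}$ made of $2g-1$ pants, and then propagates this along the two exhaustions; Lemma~\ref{Lemma:Gluing} is invoked, as you do, with the observation that only finitely many local boundary patterns occur. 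Your direct tree-matching route is a legitimate alternative and avoids the auxiliary surface $X_{\infty}$, but the paper's use of $F_g$ sidesteps the issue discussed next.

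The gap in your argument is the sentence ``fix a hyperbolic pants decomposition of the compact quotient $X=\Omega(G)/G$ \dots\ Lifting it to $\Omega(G)$ produces a $G$-invariant bounded pants decomposition.'' This fails for a generic pants decomposition: a pants curve $c\subset X$ lifts to closed curves in $\Omega(G)$ only when its free homotopy class lies in the kernel of $\pi_1(X)\to G$; otherwise its preimage is a union of lines accumulating on $\Lambda(G)$, and the preimage of an adjacent pants is not a pair of pants at all. You must choose the decomposition adapted to the Schottky marking: take the $g$ curves coming from the defining circles $C_1,\dots,C_{2g}$ (these lie in the kernel by construction) and then cut the resulting $2g$-holed sphere $F_g$ into pants by $2g-3$ further curves, all of which automatically lift to closed curves since $F_g$ embeds in $\Omega(G)$. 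With that choice your $3$-regular tree picture is correct. Finally, your last step is more laborious than necessary: the paper simply cites the result (from \cite{ShigaKlein}) that any quasiconformal map between Schottky regions extends to a quasiconformal self-map of $\widehat{\mathbb C}$, so the truncation-and-normal-families argument can be replaced by that reference.
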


\noindent
{\it Proof of Theorem \ref{thm:universal}.}
Let $P$ be a pair of pants bounded by three hyperbolic geodesics $\alpha_1, \alpha_2, \alpha_3$ of length one.
We make infinite copies $\{P_n\}_{n\in \mathbb Z}$ of $P$ and construct a Riemann surface $X_{\infty}$ as follows (see also \textsc{Figure} \ref{Fig1}).

\begin{figure}\centering
\includegraphics[width=14cm]{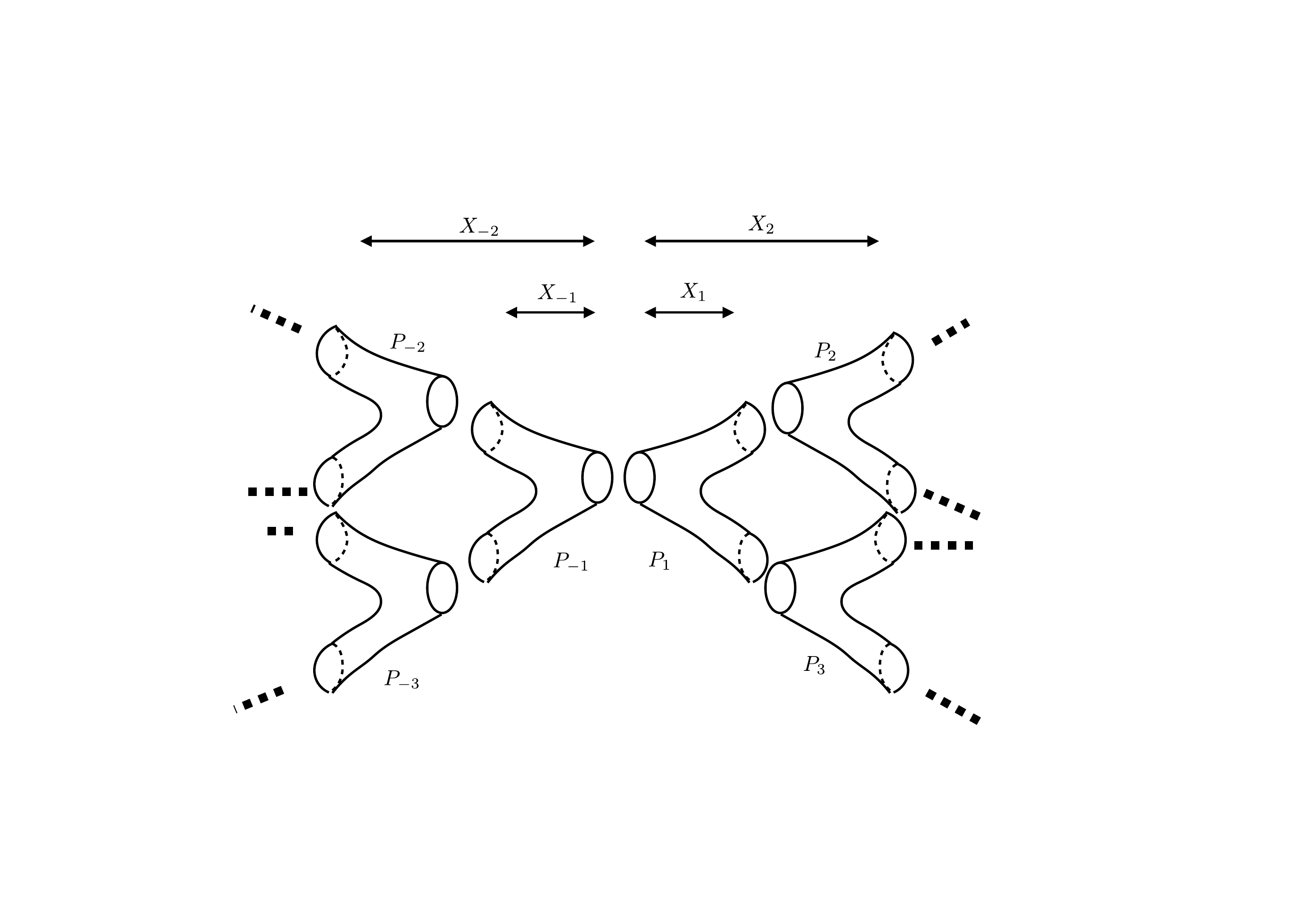}
\caption{}
\label{Fig1}
\end{figure}

Let $\alpha_{1, n}, \alpha_{2, n}$ and $\alpha_{3, n}$ be boundary curves of $P_n$ corresponding to $\alpha_1, \alpha_2$ and $\alpha_3$, respectively.
First, we put $X_1=P_1$, which is the surface of the 1st generation.
We glue $P_1$ and $P_2$ by identifying $\alpha_{2, 1}$ and $\alpha_{1, 2}$.
We also glue $P_1$ and $P_3$ by identifying $\alpha_{3, 1}$ and $\alpha_{1, 3}$.
The resulting surface denoted by $X_2$ is the surface of the 2nd generation, which is bounded by $5$ geodesics, $\alpha_{1, 1}, \alpha_{2, 2}, \alpha_{3, 2}, \alpha_{2, 3}$ and $\alpha_{3, 3}$.
Inductively, we make $X_{k+1}$ from $X_k$ $(k\in \mathbb N)$ by attaching copies of $P$ along all boundary curves of $X_k$ except $\alpha_{1. 1}$.
Symmetrically, we make $X_{-k}$ for $k\in \mathbb N$ (see \textsc{Figure} \ref{Fig1}).

We obtain the Riemann surface $X_{\infty}$ by identifying $\alpha_{1, 1}\subset\partial\cup_{k\in \mathbb N}X_k$ and $\alpha_{1, -1}\subset\partial\cup_{k\in \mathbb N}X_{-k}$.
Then, both $X_k$ and $X_{-k}$ are subsurfaces of $X_{\infty}$ bounded by $2^k+1$ geodesics of length one.
$X_k$ is made by $P_1, P_2, \dots , P_{2^k -1}$ and $X_{-k}$ is by $P_{-1}, \dots ,  P_{-2^k+1}$.

Let $G$ be a Schottky group of genus $g>1$.
We show that $\Omega(G)$ is quasiconformally equivalent to $X_{\infty}$.

From the definition of Schottky groups, there are mutually disjoint $2g$ Jordan curves $C_1, C_2, \dots , C_{2g}$ in $\widehat{\mathbb C}$ such that the outside of them, which is denoted by $F_g$, is a fundamental domain for $G$. 
The group $G$ is a free group of rank $g$ generated by $\gamma_1, \dots , \gamma_g$ and each $\gamma_j$ maps the inside of $C_{2j-1}$ onto the outside of $C_{2j}$ $(j=1, \dots , g)$.
Thus, $\Omega (G)$ is constructed from infinite copies of $F_g$ by gluing their boundary curves according to those correspondences (see 
\textsc{Figure} \ref{Fig2} for $g=3$). 
The correspondence gives a regular exhaustion of $\Omega (G)$
\begin{equation*}
	F_g=W_0\subset W_1\subset \dots \subset W_n\subset W_{n+1}\subset \dots \subset \Omega (G).
\end{equation*}
 The precise construction is the following.
 
 We start at $W_0:=F_g$. It is a region bounded by $2g$ simple closed curves $C_1, C_2, \dots , C_{2g}$.
 We put
 \begin{equation*}
 	W_1=\mathrm{Int}\left (\overline{W_0}\cup \bigcup_{j=1}^{g}\gamma_{j}^{\pm 1}(\overline{F_g})\right ).
 \end{equation*}
 $W_1$ is a region bounded by $2g(2g-1)$ simple closed curves.
 
 Inductively, we make
 \begin{equation*}
 	W_n :=\mathrm{Int}\left (\overline{W_{n-1}}\cup\bigcup_{\gamma\in S_n}\gamma (\overline{F_g})\right ),
 \end{equation*}
 where $S_n\subset G$ is the set of $\gamma\in G$ whose word lengths with respect to $\gamma_1^{\pm 1}, \dots , \gamma_g^{\pm 1}$ are precisely $n$.
 For each component $c$ of $\partial W_{n-1}$, there exist a unique $\gamma\in S_n$ and a unique $C\in \{C_1, \dots , C_{2g}\}$ such that $c=\gamma (C)$.
 Thus, $W_n$ is a region bounded by $2g(2g-1)^n$ simple closed curves coming from $C_1, \dots C_{2g}$.
 We also see that the region $W_n$ consists of $N(g):=1+\sum_{k=0}^{n-1}2g(2g-1)^k$ copies of $F_g$.
 
 \medskip
 
 Next, we make a regular exhaustion of $X_{\infty}$ to give a quasiconformal mapping from $X_{\infty}$ onto $\Omega (G)$.
 
 Let $k\in \mathbb N\cup\{0\}$ with $2^k< 2g-1<2^{k+1}$.
 In the above construction of $X_{\infty}$, we consider a subsurface of $X_{\infty}$ made by $P_1, \dots , P_{2g-1}$ and denote it by $\mathcal F_g$.
 We see that $X_k\subset \mathcal F_g\subset X_{k+1}$ and $\mathcal F_g$ is bounded by $2g$ closed geodesics.
Since both $\mathcal F_g$ and $F_g$ are Riemann surfaces of genus zero bounded by $2g$ simple closed curves, there exists a quasiconformal mapping $F$ from $\mathcal F_g$ onto $F_g$.
The quasiconformal mapping $F$ yields a correspondence between the set of boundary curves of $\mathcal F_g$ and that of $F_g$.
We put $\mathcal C_j=F^{-1}(C_j)$ $(j=1, \dots , 2g)$.

By using this correspondence between $C_j$ and $\mathcal C_j$ together with the configuration of $\{W_n\}_{n=0}^{\infty}$ by copies of $F_g$, we construct a regular exhaustion of $X_{\infty}$, 
 \begin{equation*}
 	\mathcal F_g=\widetilde{W}_0\subset\widetilde{W}_1\dots \subset \widetilde{W}_n\subset \widetilde{W}_{n+1}\subset \dots \subset X_{\infty}.
 \end{equation*}

\begin{figure}
\centering
\includegraphics[width=14cm]{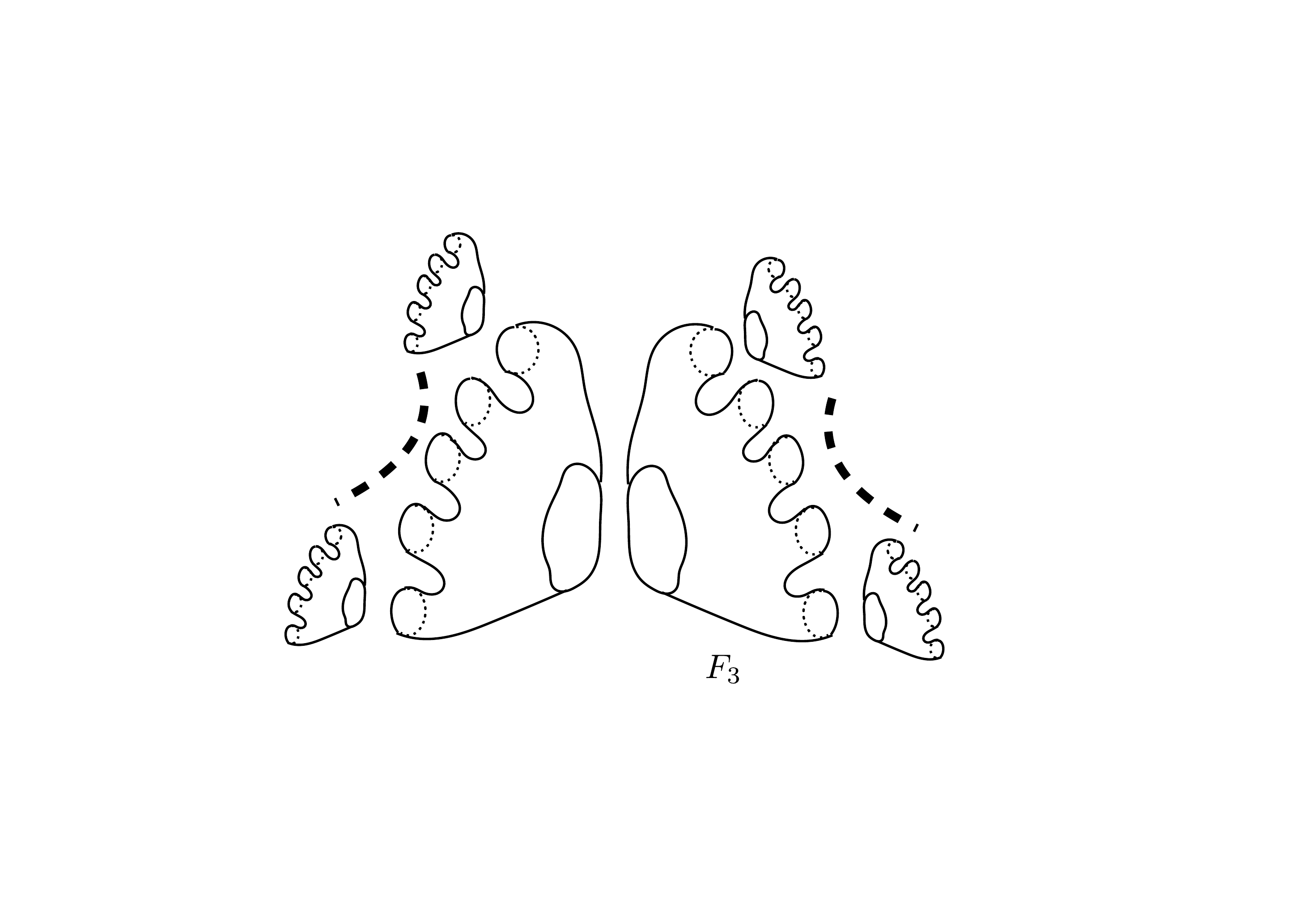}
\caption{}
\label{Fig2}
\end{figure}

Because of those constructions of the exhaustions, the quasiconformal mapping $F : \mathcal F_g\to F_g$ gives a quasiconformal mapping $\widetilde{F}$ from $X_{\infty}\setminus\cup_{n\in \mathbb N}\partial \widetilde{W_n}$ onto $\Omega (G)\setminus\cup_{n\in \mathbb N}\partial W_n$.
Noting that there are finitely many boundary behaviors of $\widetilde{F}$ near $\cup_{n\in \mathbb N}\partial \widetilde{W_n}$, we see that from Lemma \ref{Lemma:Gluing} that $\Omega (G)$ and $X_{\infty}$ are quasiconformally equivalent.

Let $G'$ be another Schottky group. Using the same argument as above for $G$, we may show that $\Omega (G')$ is quasiconformally equivalent to $X_{\infty}$. Hence, we conclude that $\Omega (G)$ and $\Omega (G')$ are quasiconformally equivalent.
As we have already noted (cf. \cite{ShigaKlein}), every quasiconformal mapping on $\Omega (G)$ is extended to a quasiconformal mapping on $\widehat{\mathbb C}$.
Thus, we have a quasiconformal mapping $f$ on $\widehat{\mathbb C}$ with $f(\Omega (G))=\Omega (G')$ as desired.
\qed

\medskip

\noindent
{\bf The universal Schottky space}

   Let $\mathcal{C}$ be the standard middle $\frac{1}{3}$-Cantor set for $[-1, 1]$. 
   It is obtained by removing the middle one thirds open intervals from $[-1, 1]$ successively. Let us recall the construction.
   
   First, we remove an open interval $J_1$ of length $2/3$ from $E_0:=I=[-1, 1]$ so that $I\setminus J_1$ consists of two closed intervals $I_1^{-1}, I_1^{1}$ of the same length, where $I_1^{-1}\subset \mathbb R_{<0}$ and $I_1^{1}\subset \mathbb R_{>0}$.
We put $E_1=I_1^{-1}\cup I_1^{1}$.
We remove an open interval of length $\frac{1}{3} |I_1^{i}|$ from each $I_1^{\pm 1}$ so that the remainder $E_2$ consists of four closed intervals of the same length, where $|J|$ is the length of an interval $J$.
Inductively, we define $E_{k+1}$ from $E_k=\bigcup_{i=-2^{k-1}}^{-1}I_k^{i}\cup \bigcup_{i=1}^{2^{k-1}}I_k^i$ by removing an open interval of length $\frac{1}{3}|I_k^{i}|$ from each closed interval $I_k^{i}$ of $E_k$ so that $E_{k+1}$ consists of $2^{k+1}$ closed intervals of the same length.
The Cantor set $\mathcal C$ is defined by
\begin{equation*}
	\mathcal C =\cap_{k=1}^{\infty}E_k.
\end{equation*}

   We put $\widehat{X}:=\widehat{\mathbb C}\setminus {\mathcal C}$.
      We denote the Teichm\"uller space $\mathscr{T}(\mathcal C)$ of $\mathcal{C}$ by $\mathscr{S}$. Then, we insist the following:
   \begin{thm}
   \label{thm:universalS}
   	For any $g>1$, there exists a holomorphic injection 
   	\begin{equation}
   		\iota_{g} : \mathscr{S}_g\hookrightarrow \mathscr{S}
   	\end{equation}
   	similar to (\ref{eqn:UniversalT}).
   \end{thm}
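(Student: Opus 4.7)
My plan is to construct $\iota_g$ in two stages: first produce a quasiconformal self-homeomorphism $F$ of $\widehat{\mathbb C}$ carrying the limit set $\Lambda(G_g)$ of a reference Schottky group $G_g$ onto the middle--thirds Cantor set $\mathcal C$, and then define $\iota_g$ by the substitution $w_\mu\mapsto w_\mu\circ F^{-1}$ on representative quasiconformal deformations. The construction of $\iota_g$ from $F$ parallels the definition of the embedding $\iota_R$ in (\ref{eqn:UniversalT}); the substantive content lies in the existence of $F$.

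The main obstacle is producing $F$. Theorem~\ref{thm:universal} identifies Schottky regions only with each other, and $\mathcal C$ is not a Kleinian limit set, so equivariant techniques are unavailable. My approach is to adapt the exhaustion-and-gluing strategy from the proof of Theorem~\ref{thm:universal}. The Cantor construction $E_0\supset E_1\supset\cdots$ gives a natural nested exhaustion $\widetilde W_k:=\widehat{\mathbb C}\setminus E_k$ of $\widehat X=\widehat{\mathbb C}\setminus\mathcal C$; each dyadic layer $\widetilde W_{k+1}\setminus\overline{\widetilde W_k}$ consists of $2^k$ open middle--thirds intervals that, thanks to the constant contraction ratio $1/3$, are mutually conformally equivalent and of uniformly bounded conformal type. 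This is precisely the analog for $\widehat X$ of the uniformly bounded pair--of--pants pieces in the model surface $X_\infty$ used in the proof of Theorem~\ref{thm:universal}. Exactly as there, I would match the tree structure of the layers of $\widetilde W_k$ with the tree structure of the exhaustion $W_0\subset W_1\subset\cdots$ of $\Omega(G_g)$ built from translates of a fundamental domain, produce layerwise quasiconformal correspondences of uniformly bounded dilatation, and assemble them by iterated application of Lemma~\ref{Lemma:Gluing}. This yields a quasiconformal homeomorphism $\Omega(G_g)\to\widehat X$, which extends to a quasiconformal self-homeomorphism $F\colon\widehat{\mathbb C}\to\widehat{\mathbb C}$ by the extension theorem invoked at the end of the proof of Theorem~\ref{thm:universal}; by continuity $F(\Lambda(G_g))=\mathcal C$.

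Given $F$, I define $\iota_g([\mu])=[\nu_\mu]\in\mathscr{T}(\mathcal C)=\mathscr S$, where $\mu\in\mathrm{Belt}(G_g;\mathbb C)$ has normalized deformation $w_\mu$ and $\nu_\mu\in\mathrm{Belt}(\mathbb C)$ is the Beltrami coefficient of the composition $w_\mu\circ F^{-1}$. Well-definedness is immediate: if $w_{\mu_1}$ and $A\circ w_{\mu_2}$ are homotopic rel $\Lambda(G_g)$ for some M\"obius $A$, then $w_{\mu_1}\circ F^{-1}$ and $A\circ w_{\mu_2}\circ F^{-1}$ are homotopic rel $F(\Lambda(G_g))=\mathcal C$, and the reverse implication gives injectivity. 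Holomorphicity follows from the standard composition formula for Beltrami coefficients, which makes $\mu\mapsto\nu_\mu$ a holomorphic map between the relevant Banach spaces, combined with the fact that $\pi_S$ and $\pi_T$ are holomorphic quotients defining the complex structures on $\mathscr S_g$ and $\mathscr{T}(\mathcal C)$.
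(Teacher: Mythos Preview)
Your overall strategy---produce a global quasiconformal $F$ carrying $\Lambda(G_g)$ to $\mathcal C$ and then define $\iota_g$ by precomposition with $F^{-1}$---is exactly the paper's, and your final paragraph on well-definedness, injectivity, and holomorphicity is essentially what the paper does.

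The gap is in your construction of $F$. The exhaustion $\widetilde W_k=\widehat{\mathbb C}\setminus E_k$ is not the right object for Lemma~\ref{Lemma:Gluing}. Each $E_k$ is a finite union of closed real intervals and hence has empty two-dimensional interior, so $\overline{\widetilde W_k}=\widehat{\mathbb C}$ and your ``layers'' $\widetilde W_{k+1}\setminus\overline{\widetilde W_k}$ are empty. If you mean instead $\widetilde W_{k+1}\setminus\widetilde W_k=E_k\setminus E_{k+1}$, these are open arcs on $\mathbb R$, not planar subsurfaces and certainly not pairs of pants. More to the point, the relative boundary components of $\widetilde W_k$ are slits, not simple closed curves, so the hypothesis of Lemma~\ref{Lemma:Gluing} (a simple closed curve separating the surface) is not met, and the ``layerwise quasiconformal correspondences'' you propose have no pieces of the right topological type to match with copies of $F_g$.

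The paper fixes this by replacing the slit exhaustion with a genuine pants decomposition of $\widehat X$: around each interval $I_k^i$ one draws the round circle of radius $\tfrac{5}{6}|I_k^i|$ centered at its midpoint. These circles are mutually disjoint in $\widehat X$ and nest according to the dyadic tree, cutting $\widehat X$ into pairs of pants $\{\mathcal P_n\}_{n\in\mathbb Z}$ which, by the constant ratio $1/3$, are all conformally equivalent for $|n|\ge 2$. This configuration coincides with that of $\{P_n\}$ in $X_\infty$, so Lemma~\ref{Lemma:Gluing} gives a quasiconformal map $X_\infty\to\widehat X$; combined with Theorem~\ref{thm:universal} this yields $F$. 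Passing through $X_\infty$ is also what reconciles the binary branching of $\widehat X$ with the $(2g-1)$-ary branching of the exhaustion $\{W_n\}$ of $\Omega(G_g)$, a point your sketch does not address.
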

   \begin{proof}
   We take a pants decomposition $\{\mathcal P_n\}_{n\in \mathbb Z}$ of the Riemann surface $\widehat X:= \widehat{\mathbb C}\setminus {\mathcal C}$ as follows.
   
 We denote the imaginary axis by $C_0^0$.
 For any $(k, i)$ $(k\in \mathbb Z\setminus\{0\}; i=\pm 1, \dots \pm 2^{k-1})$, we take a circle $C_k^{i-1}$ which is a circle centered at the midpoint of $I_k^i$ with radius $\frac{5}{6}|I_k^i|$.
 We see that all $C_k^i$'s are mutually disjoint curves in $\widehat X$ and each $C_k^{i}$ contains $C_{k+1}^{\varepsilon (i)(2|i|-1)}, C_{k+1}^{2i}$, where $\varepsilon(i)=-1$ if $i<0$ and $\varepsilon(i)=1$ if $i>0$.
  Hence, they make a pants decomposition of $\widehat X$.
 A pair of pants bounded by $C_0^0$, $C_1^1$ (resp. $C_1^{-1}$) and $C_1^2$ (resp. $C_1^{-2}$) is denoted by $\mathcal P_1$ (resp. $\mathcal P_{-1}$). 
 We also denote by $\mathcal P_{\varepsilon(i)(2^{k}+(i-1))}$ a pair of pants bounded by $C_k^i$, $C_{k+1}^{\varepsilon (i)(2|i|-1)}$ and $C_{k+1}^{2i}$.
 Obviously, for every $n$ with $|n|\geq 2$, $\mathcal P_n$ is conformally equivalent to $\mathcal P_2$.
 
 Because of the construction of $\{\mathcal P_n\}_{n\in \mathbb Z}$, the configuration of the pants decomposition $\{\mathcal P_n\}_{n\in \mathbb Z}$ of $\widehat X$ is exactly the same as that of the Riemann surface $X_{\infty}$ of the proof of Theorem \ref{thm:universal}.
 It is also seen that each $\mathcal P_n$ is quasiconformally equivalent to $P_n$.
From Lemma \ref{Lemma:Gluing}, we see that the Riemann surface $X_{\infty}$  is quasiconformally equivalent to $\widehat X$.
   
   	Let $G_g$ be a Schottky group of genus $g$ and $\Omega (G_g)$ the region of discontinuity of $G_g$. From Theorem \ref{thm:universal} and the above argument, we see that there exists a quasiconformal mapping $f : \widehat{\mathbb{C}} \to \widehat{\mathbb{C}}$ with $f(\widehat{X})= \Omega(G_g)$.
   	For each quasiconformal deformation $h$ of $G_g$, $H_h:=h\circ f$ is a quasiconformal deformation of the Riemann surface $\widehat{X}$.
   	It is obvious that $h_1$ and $h_2$ are equivalent as quasiconformal deformations of $G_g$ if and only if $H_{h_1}$ and $H_{h_2}$ are equivalent as quasiconformal deformations of $\widehat{X}$.
   	Thus, we have a well-defined map $\iota_g : \mathscr{S}_g \to \mathscr{S}$.
   	The injectivity of the map follows from the definitions of $\mathscr{S}_g$ and $\mathscr{S}$.
   	
   	The complex structure of $\mathscr{S}_g$ is defined by that of the space of Beltrami differentials.
   	It is the same for the complex structure of $\mathscr{S}$.
   	Hence, the map $\iota_g$ is holomorphic. 
   \end{proof}

\end{document}